
\documentclass{daj}

\usepackage{amsmath}
\usepackage{amsthm}
\usepackage{amssymb}
\usepackage{mathtools}
\usepackage{tikz}
\usetikzlibrary{calc}
\tikzset{vtx/.style={circle, fill, inner sep=1.2pt}}
\tikzset{bigvtx/.style={circle, fill, inner sep=2.1pt}}
\usepackage{enumitem}

\newtheorem{theorem}{Theorem}
\newtheorem{lemma}[theorem]{Lemma}
\newtheorem{proposition}[theorem]{Proposition}
\theoremstyle{definition}
\newtheorem{definition}[theorem]{Definition}
\newtheorem{conjecture}[theorem]{Conjecture}
\theoremstyle{remark}
\newtheorem{remark}[theorem]{Remark}

\numberwithin{equation}{section}
\numberwithin{theorem}{section}

\newcommand{\N}{\mathbb{N}}
\newcommand{\R}{\mathbb{R}}
\newcommand{\Z}{\mathbb{Z}}
\newcommand{\cala}{\mathcal{A}}
\newcommand{\calf}{\mathcal{F}}
\newcommand{\calg}{\mathcal{G}}
\newcommand{\calp}{\mathcal{P}}
\newcommand{\calr}{\mathcal{R}}
\newcommand{\diam}{\mathrm{diam}}

\DeclarePairedDelimiter{\ceil}{\lceil}{\rceil}
\DeclarePairedDelimiter{\floor}{\lfloor}{\rfloor}

\dajAUTHORdetails{%
  title = {Possible Sizes of Sumsets}, 
  author = {Isaac Rajagopal},
  plaintextauthor = {Isaac Rajagopal},
    %
    %
    %
    %
    %
   %
}   

\dajEDITORdetails{%
   year={2026},
   number={2},
   received={11 November 2025},   
   published={16 July 2026},  
   doi={10.19086/da.165102},       
}   

\begin{document}

\begin{frontmatter}[classification=text]

\title{Possible Sizes of Sumsets} 

\author[isaac]{Isaac Rajagopal}

\begin{abstract}
   Nathanson introduced the \emph{range of cardinalities of $h$-fold sumsets} \[\mathcal{R}(h,k) \coloneq  \{|hA|:A \subset \Z \text{ and }|A| = k\}.\] Following a remark of \text{Erd\H{o}s} and Szemer\'edi that determined the form of $\mathcal{R}(h,k)$ when $h=2$, Nathanson asked what the form of $\mathcal{R}(h,k)$ is for arbitrary $h, k \in \mathbb{N}$. For $h \in \mathbb{N}$, we prove there is some constant $k_h \in \mathbb{N}$ such that if $k > k_h$, then $\mathcal{R}(h,k)$ is the entire interval $\left[hk-h+1,\binom{h+k-1}{h}\right]$ except for a specified set of $\binom{h-1}{2}$ numbers. Moreover, we show that one can take $k_3 = 2$.
\end{abstract}
\end{frontmatter}

\section{Introduction}
Let $A \subset \Z$ be a set of integers. For $h \in \N$, write \[hA \coloneq  \{a_1+\cdots+a_h:a_i \in A\}\] to denote the $h$-fold sumset of $A$. Note that the $a_i$ need not be distinct. Nathanson defined the set $\calr(h,k)$ of the possible sizes of these $h$-fold sumsets when $|A| = k$. 

\begin{definition}[\cite{nathanson25Problems}]
    Let $h, k \in \N$. Define the \emph{range of cardinalities of $h$-fold sumsets } \[\mathcal{R}(h,k) \coloneq  \{|hA| : A \subset \Z \text{ and } |A| = k\}.\]
\end{definition}

Nathanson \cite{nathanson25Problems} posed the problem of determining the exact values in $\mathcal{R}(h,k)$ for arbitrary $h,k \in \N$. In this paper, we will determine the form of $\mathcal{R}(h,k)$ for fixed $h \in \N$ and large $k$, and for $h=3$ and all $k \in \N$, specifically answering Nathanson's Problems 8 and 9 in \cite{nathanson25Problems}.

\begin{figure}[h]
\begin{center}
\begin{tikzpicture}[xscale=0.7, yscale=0.7, every node/.style={font=\small}]
\draw[very thick,fill=lightgray!70] (-0.3,-0.7) -- (-0.3,-4.75) -- (3.75,-0.7) -- (-0.3,-0.7); 
  \node at (5,0) {37};
  \foreach \j in {0,...,5} {
    \pgfmathtruncatemacro{\val}{38 + \j}
    \node at (\j,-1) {\val};
  }
  \foreach \j in {0,...,5} {
    \pgfmathtruncatemacro{\val}{44 + \j}
    \node at (\j,-2) {\val};
  }
  \foreach \j in {0,...,5} {
    \pgfmathtruncatemacro{\val}{50 + \j}
    \node at (\j,-3) {\val};
  }
  \foreach \j in {0,...,5} {
    \pgfmathtruncatemacro{\val}{56 + \j}
    \node at (\j,-4) {\val};
  }
  \foreach \j in {0,...,5} {
    \pgfmathtruncatemacro{\val}{62 + \j}
    \node at (\j,-5) {\val};
  }
  \node at (2.5,-5.5) {$\vdots$};
  \node at (1.0,-5.5) {$\vdots$};
  \node at (4.0,-5.5) {$\vdots$};
  \foreach \j in {0,...,4} {
    \pgfmathtruncatemacro{\val}{920 + \j}
    \node at (\j,-6.5) {\val};
  } 
\end{tikzpicture}
\end{center}
\caption{We trivially have $\mathcal{R}(6,7) \subseteq [37,924]$. The gray triangle represents the numbers in $\Delta_{6,7}$ and Theorem~\ref{mainthm} shows that there are no elements of $\mathcal{R}(6,7)$ in $\Delta_{6,7}$. Conjecture~\ref{mainconj} says that $\calr(6,7)$ is all of the listed numbers outside the gray triangle.}\label{figure1}
\end{figure}
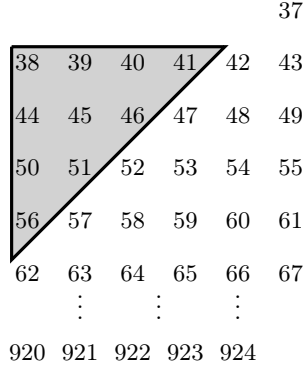

The first work related to $\calr(h,k)$ was by Erd\H{o}s and Szemer\'edi \cite{ES83}, who remarked that ``it is easy to see that'' $\mathcal{R}(2,k) = \left[2k-1,\binom{k+1}{2}\right]$ for all $k \in \N$. Nathanson \cite{nathanson25Problems} determined the exact form of $\calr(h,3)$  for any 
$h \in \N$. Nathanson's recent program has involved finding specific values in $\calr(h,k)$ and studying the frequency with which certain sumset sizes show 
up, with particular attention to $k=4$ 
\cite{nathanson2025additivesumsetsizestetrahedral,nathanson2025explicitsumsetsizesadditive,nathanson25Problems,nathanson2025triangulartetrahedralnumberdifferences,nathanson2025compressioncomplexitysumsetsizes,obryant2025nathansonstriangularnumberphenomenon}. With fixed $h>2$ and $k$ increasing, the problem of determining $\calr(h,k)$ remained fully open.

For real numbers $a$ and $b$, let $[a,b]$ denote the interval of integers from $\ceil{a}$ to $\floor{b}$, including endpoints. It is not hard to show \cite[Theorem 1.3]{Nathanson96} that \begin{equation}\label{eqtrivialbound}
    \mathcal{R}(h,k) \subseteq \left[hk-h+1,\binom{h+k-1}{h}\right],
\end{equation} and that the endpoints of this interval are achieved by the arithmetic progression $A = [0,k-1]$ and the geometric series $A = \{1,h+1,(h+1)^2,\ldots,(h+1)^{k-1}\}$.
Let $m = hk-h+1$. Tang--Xing \cite{TX19} and Schinina \cite{Schinina25} independently found that there are no elements of $\mathcal{R}(h,k)$ in $[m+1,m+h-2]$. Tang--Xing also found that there are no elements of $\mathcal{R}(h,k)$ in $[m+h+1,m+2h-3] \cup [m+2h+1,m+3h-4]$ if $k \geq 5$. We extend this trio of intervals into a set of $\min\{h,k\}-2$ intervals, which we call $\Delta_{h,k}$ because they  geometrically form a triangle when $h \leq k$, as shown in Figure~\ref{figure1}.

\begin{definition}\label{defdelta}
    Let $m = hk-h+1$. Then define 
    \[\Delta_{h,k} \coloneq  \bigcup_{\ell =0}^{\min\{h,k\}-3}[m+\ell h+1,m+\ell h+(h-2-\ell)].\]
\end{definition}
\begin{theorem}\label{mainthm} For all $h,k \in \N$, we have
    \[\mathcal{R}(h,k) \cap \Delta_{h,k} = \emptyset.\]
\end{theorem}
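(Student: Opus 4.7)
The plan is to analyze $hA$ via a standard parameterization of its elements by partitions. Translating so that $a_0 = 0$ and setting $d_l = a_{l+1}-a_l$, each element of $hA$ can be written as $\sum_{l=0}^{k-2} c_l d_l$ where $(c_0, \ldots, c_{k-2})$ is a non-increasing tuple with $h \geq c_0$ and $c_{k-2} \geq 0$; these $\binom{h+k-1}{h}$ tuples are in bijection with the multisets of size $h$ from $A$. The cardinality $|hA|$ therefore counts the distinct values of the linear form $\sum c_l d_l$. A direct count shows that, when $k \geq h$, the set $\Delta_{h,k}$ has exactly $\binom{h-1}{2}$ elements; moreover, translating by $-m$ gives exactly the Frobenius gap set of the numerical semigroup $S = \langle h-1, h\rangle$.

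My approach is to show that $|hA|-m$ avoids the Frobenius gaps of $S$ in the relevant range. For the single-perturbation family $A = \{0, 1, \ldots, k-2, k-1+t\}$ I would use an interval decomposition $hA = \bigcup_r I_r$ indexed by $r = c_{k-2}$: for small $t$, the intervals $I_0, \ldots, I_{h-1}$ merge into a single block while $I_h$ is separated from it by exactly $t$ missing integers, yielding $|hA| = m + (h-1)t$. Similar interval analyses of multi-perturbation families realize the remaining elements of $S$ in the near-AP regime. To handle arbitrary $A$, I would prove a Freiman-type rigidity: if $|hA| < m + (\ell+1)(h-1)$ for some $\ell$, then $A$ is contained in an arithmetic progression of length at most $k+\ell$, so $A$ is an AP with at most $\ell$ perturbed positions. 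For $\ell = 0, 1$ this rigidity is Tang--Xing's and Schinina's result; enumerating the perturbation patterns for larger $\ell$ and applying the $I_r$ decomposition would show the only achievable $|hA|$ values in the range $[m+\ell h-\ell, m+(\ell+1)(h-1)-1]$ lie on the ``plateau'' $[m+\ell h-\ell, m+\ell h]$, never in the gap $[m+\ell h+1, m+\ell h+h-2-\ell]$.

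The main obstacle is the Freiman-type rigidity: controlling how coincidences $\sum c_l d_l = \sum c_l' d_l$ propagate as the $d_l$'s deviate from being all equal. The constraint $\ell \leq \min(h,k) - 3$ is natural: the perturbation enumeration requires $k \geq \ell + 3$ to provide enough room, and the symmetric condition $h \geq \ell + 3$ follows from the reflection $A \mapsto (\max A + \min A)-A$, which preserves $|hA|$ and swaps the roles of initial and final differences.
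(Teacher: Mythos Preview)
Your proposal is a plan, not a proof, and you correctly identify the missing piece yourself: the ``Freiman-type rigidity'' statement that $|hA| < m+(\ell+1)(h-1)$ forces $A$ into an arithmetic progression of length at most $k+\ell$. Everything else in your outline is either a reformulation (the Frobenius-gap description of $\Delta_{h,k}-m$ is correct and pleasant but not load-bearing) or depends on this unproven step. The paper's proof is precisely a proof of this rigidity, and it is not elementary: one must split on the diameter. For $\diam(A)\le 2k-3$ the paper's Lemma~\ref{lemma1} gives $|hA|\ge m+r(h-1)$ with $r=\diam(A)-(k-1)$, via the observation that for every $w\in[1,d]\setminus A$ either $w\in 2A$ or $w+d\in 2A$, which produces $h-1$ distinct elements of $hA$ in each residue class modulo $d$. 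For $\diam(A)\ge 2k-3$ the paper's Lemma~\ref{lemma2} inducts on $h$, using the Lev--Smeliansky asymmetric $3k-3$ theorem applied to $A+(h-1)A$ to get $|hA|\ge 2hk-3h-k+3 = m+(k-2)(h-1) > \max\Delta_{h,k}$. Your sketch gives no mechanism for either half.

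Two further remarks. First, once the rigidity is known, your proposed ``enumeration of perturbation patterns'' is unnecessary: Lemma~\ref{lemma1} already gives the two-sided estimate $|hA|\in[m+r(h-1),m+rh]$ directly from the diameter, with no case analysis on which positions of the AP are perturbed. Second, your symmetry heuristic is wrong: the reflection $A\mapsto(\max A+\min A)-A$ fixes $|hA|$ and the multiset of gaps $d_l$ (reversing their order), but it does not interchange $h$ and $k$ in any sense. The reason the bound $\ell\le h-3$ appears is simply that for $\ell>h-3$ the interval $[m+\ell h+1,\,m+\ell h+(h-2-\ell)]$ is empty; no duality is involved.
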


Figure~\ref{figure1} shows an illustrated example of the points ruled out by Theorem~\ref{mainthm}. Tang--Xing's \cite{TX19} theorem describes the intervals in $\Delta_{h,k}$ with $\ell = 0$ and $\ell = 1$ and $\ell = 2$, which are the uppermost three rows of the triangle in Figure \ref{figure1}, while Schinina's \cite{Schinina25} theorem describes the interval in $\Delta_{h,k}$ with $\ell = 0$, which is the uppermost row of the triangle in Figure \ref{figure1}. Our main theorem states that for general $h$ and large $k$, all other numbers in the bounding interval outside $\Delta_{h,k}$ are in $\calr(h,k)$.

\begin{theorem}\label{thmgeneralh}
    Fix $h \in \N$. There exists a constant $k_h \in \N$ such that if $k > k_h$, then \[\calr(h,k) = \left[hk-h+1,\binom{h+k-1}{h}\right] \setminus \Delta_{h,k}.\]
\end{theorem}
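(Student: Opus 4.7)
Since (1.1) gives $\calr(h,k) \subseteq [hk-h+1,\binom{h+k-1}{h}]$ and Theorem~\ref{mainthm} gives $\calr(h,k) \cap \Delta_{h,k} = \emptyset$, what remains is to exhibit, for every integer $n$ in $I_{h,k} := [hk-h+1,\binom{h+k-1}{h}] \setminus \Delta_{h,k}$, a set $A \subset \Z$ with $|A| = k$ and $|hA| = n$, provided $k$ is large enough in terms of $h$. The plan is to set up a family of explicit \emph{block constructions} with enough flexibility to realize every such $n$, and then verify coverage by careful bookkeeping.

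The core construction takes $A = \bigsqcup_{i=0}^{t}(N_i + B_i)$, where $B_0,\ldots,B_t \subset \Z_{\geq 0}$ are blocks with $\sum_i |B_i|=k$ and $0 = N_0 < N_1 < \cdots < N_t$ grow so rapidly (roughly $N_{i+1} > h(N_i + \diam B_i)$) that the $h$-fold sumset decomposes disjointly as
\[
hA = \bigsqcup_{\substack{(j_0,\ldots,j_t)\\ \sum j_i = h}} \Big(\sum_{i} j_i N_i + j_0 B_0 + \cdots + j_t B_t\Big),
\]
giving $|hA| = \sum_{(j_0,\ldots,j_t)} |j_0 B_0 + \cdots + j_t B_t|$. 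By independently choosing each $B_i$ to be either an arithmetic progression (forcing the mixed sum to be an interval of predictable length) or a sufficiently spread-out set (forcing the mixed sum to saturate bounds of the form $\binom{|B_i|+j-1}{j}$), one gains many parameters to tune $|hA|$.

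To cover all of $I_{h,k}$, I would split $n \in I_{h,k}$ into three regimes. At the \emph{top}, start from a Sidon set realizing $\binom{h+k-1}{h}$ and introduce one controlled collision at a time: each modification decreases $|hA|$ by a computable amount, and iterating or combining such collisions attains every sufficiently large value. In the \emph{middle}, the two-block construction $A = [0,k-r-1] \cup (N + B_1)$ with $B_1$ of size $r$ ranging over many shapes sweeps out a long subinterval for each $r$, and varying $r$ glues these into a single interval. At the \emph{bottom} --- the delicate region immediately bordering $\Delta_{h,k}$ --- take $A = [0,k-s-1] \cup S$ with $|S|=s$ concentrated just past the endpoint of the AP, and show that as $S$ varies the attained $|hA|$ fills each horizontal ``row'' of the triangle in Figure~\ref{figure1} outside $\Delta_{h,k}$, mirroring and extending the arguments of Tang--Xing and Schinina that rule out the rest.

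The main obstacle is the bottom regime: verifying that every integer in row $\ell$ lying outside $\Delta_{h,k}$ is actually realized and no additional integer is accidentally skipped. This requires tracking $|hA| \pmod h$ for the perturbation constructions and matching the attained values row by row against the exact shape of $\Delta_{h,k}$ from Definition~\ref{defdelta}. The hypothesis $k > k_h$ enters precisely to guarantee that the subintervals produced by the top, middle, and bottom constructions overlap pairwise, so that their union is all of $I_{h,k}$.
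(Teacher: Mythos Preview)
Your general framework --- disjoint-union block constructions and a split into regimes --- matches the paper's strategy, and your formula $|hA| = \sum_{(j_0,\ldots,j_t)} |j_0 B_0 + \cdots + j_t B_t|$ is essentially the paper's Lemma~\ref{lemmagenfunc}. The bottom-regime idea of taking $A$ to be an interval plus a small set is close to what Proposition~\ref{lemmaQuadratic} does (there via $(h,d)$-filling sets).

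The genuine gap is in the top and middle regimes. The phrase ``introduce one controlled collision at a time: each modification decreases $|hA|$ by a computable amount, and iterating or combining such collisions attains every sufficiently large value'' hides the whole difficulty. A single additive collision in a $B_h$-set (say forcing $a_1+a_2=a_3+a_4$) does not decrease $|hA|$ by $1$; it kills $\Theta(k^{h-2})$ sums at once, since every $h$-tuple containing $\{a_1,a_2\}$ now coincides with one containing $\{a_3,a_4\}$. So the decrements you get from natural one-step modifications live at the scale $k^{h-2}$, not $1$, and nothing in your sketch explains how to fill the resulting gaps. The same problem recurs at every scale: varying the size $r$ of a block changes $|hA|$ by a polynomial in $k$, not by $1$.

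The paper's resolution is not to find a single parameter with unit steps, but to set up $2h-3$ parameters $(s_h,t_{h-1},s_{h-1},\ldots,t_2,j)$ whose step sizes are graded --- the $i$th parameter has maximal step $\delta_i$ and total range $\Delta_i$, with $\delta_1=1$ and $\delta_i\le\Delta_{i-1}$ --- and then invoke a discrete intermediate-value lemma (Lemma~\ref{lemmahypercubeIVT}) saying that any function monotone in each coordinate with such graded steps has an interval as its image. Achieving $\delta_1=1$ requires a delicate construction (the sets $S_{m,\ell}$ and $T_{m,\ell}$ built from powers of $m$, analyzed via generating functions) that your proposal does not supply. Without some mechanism of this kind, the plan as written does not close.
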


\begin{remark}\label{remarkasterisk}
Tracking constants in our proof shows that one can take $k_h \leq 2^{20h^2}$. We have not attempted to optimize this quantity further because it appears to be quite far from the truth.
\end{remark}
Numerical experiments for small $h$ and $k$ suggest that perhaps the conclusion of Theorem \ref{thmgeneralh} holds for all $k> h$. 

\begin{conjecture}\label{mainconj}
Let $h \in \N$, and let $k>h$. Then \[\calr(h,k) = \left[hk-h+1,\binom{h+k-1}{h}\right] \setminus \Delta_{h,k}.\]
\end{conjecture}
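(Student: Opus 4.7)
The plan is to close the gap between Theorem~\ref{thmgeneralh}'s bound $k_h \le 2^{20h^2}$ and the conjectural threshold $k > h$ by replacing any asymptotic inputs with explicit, parameter-efficient constructions. Since Theorem~\ref{mainthm} already provides the non-realizability half of the conjecture, it suffices to show that for every $k>h$ and every $s \in I_{h,k} \coloneqq \bigl[hk-h+1,\binom{h+k-1}{h}\bigr] \setminus \Delta_{h,k}$, there is a set $A \subset \Z$ with $|A|=k$ and $|hA|=s$.

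The core construction I would use is the \emph{widely spaced block family}
\[
A = \bigcup_{i=1}^{r}(N_i + B_i),
\]
where $B_1,\dots,B_r \subset \Z$ are small blocks with $\sum_i |B_i| = k$ and the shifts $N_1 \ll \cdots \ll N_r$ are chosen large enough that the $h$-fold sumset decomposes as the disjoint union
\[
hA = \bigsqcup_{e_1+\cdots+e_r=h}\Bigl(\sum_{i} e_i N_i + e_1 B_1 + \cdots + e_r B_r\Bigr),
\]
so $|hA| = \sum_{e_1+\cdots+e_r=h} |e_1 B_1 + \cdots + e_r B_r|$. Choosing each $B_i$ to be either an arithmetic progression or a singleton and varying the composition $(|B_1|,\dots,|B_r|)$ yields an explicit polynomial expression for $|hA|$ that realizes many values on a coarse grid inside $I_{h,k}$. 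To fill the grid I would supplement this with two monotone sweeps: a descending sweep starting from a set whose $h$-fold sums are all distinct (realizing $\binom{h+k-1}{h}$) and introducing one additive coincidence at a time to decrease $|hA|$ by exactly one, and an ascending sweep starting from the arithmetic progression $\{0,1,\dots,k-1\}$ and gradually displacing a single element outward to produce an increasing chain of sumset sizes. The combinatorial content is then to verify that these three regions of coverage overlap for every $k>h$, not merely asymptotically.

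The main obstacle I expect is controlling the values of $s$ immediately bordering $\Delta_{h,k}$, namely $s = m + \ell h$ and $s = m + \ell h + (h-1-\ell)$ for $\ell \in \{0,\dots,\min(h,k)-3\}$. The same structural rigidity underlying Theorem~\ref{mainthm}---which forbids $s$ from landing inside $\Delta_{h,k}$---forces any $A$ realizing such a boundary value to be a near-arithmetic-progression of a very specific shape, whose existence depends sensitively on $h$, $k$, and the row index $\ell$. For $k$ comfortably larger than $h$ one has many parameters to dodge these rigid configurations, but as $k\to h+1$ the parameter space shrinks and each row $\ell$ seems to demand its own bespoke construction. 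Handling this uniformly will likely require a row-by-row case analysis indexed by $\ell$, supplemented by a computer-assisted verification in a small residual regime $h < k \le c(h)$ to confirm the boundary cases.
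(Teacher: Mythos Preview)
The statement is Conjecture~\ref{mainconj}, which the paper explicitly leaves \emph{open}; there is no proof in the paper to compare against. Theorem~\ref{thmgeneralh} establishes the conclusion only for $k>k_h$ with $k_h\le 2^{20h^2}$, Theorem~\ref{thmh3} settles $h=3$, and Section~\ref{sectfuturedirections} discusses the full conjecture as future work.

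Your proposal is a plan, not a proof, and it does not supply a new idea beyond what the paper already uses. The ``widely spaced block family'' with the disjointness formula $|hA|=\sum_{e_1+\cdots+e_r=h}|e_1B_1+\cdots+e_rB_r|$ is exactly the disjoint-union construction $\sqcup$ of Definition~\ref{defDisjointUnion} together with the multiplicativity of $\calf_A$ in Lemma~\ref{lemmagenfunc}. The paper exploits this far more delicately than you propose, tuning $2h-3$ interacting parameters through Lemma~\ref{lemmahypercubeIVT} to prove Proposition~\ref{lemmaMain}; your ``descending sweep introducing one coincidence at a time'' is precisely the $\delta_1=1$ step there (the parameter $s_h$), and your ``ascending sweep from an arithmetic progression'' is the content of Proposition~\ref{lemmaQuadratic} via Lemma~\ref{lemmaauxilliary}. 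Since the paper already implements refined versions of all three of your ingredients and still only reaches $k_h\le 2^{20h^2}$, the coarser variants you describe have no mechanism to do better.

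The genuine gap is your fallback: ``computer-assisted verification in a small residual regime $h<k\le c(h)$.'' This is only a plan if you first produce an explicit $c(h)$ that is small enough to be feasible, and that is the entire difficulty. With the paper's current threshold, even $h=4$ would require checking all $k$ up to roughly $2^{320}$, which is not a finite verification in any practical sense. You have not indicated any step that would lower $c(h)$, so the proposal amounts to restating the problem.
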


When $k < h-1$, the conclusion of Conjecture~\ref{mainconj} never holds; this follows from results of Lev \cite{Lev96}. See Section \ref{secthgeqk} for a discussion of what happens when $k \leq h$.

For any fixed $h$, Theorem~\ref{thmgeneralh} reduces Conjecture~\ref{mainconj} to a finite problem, albeit a computationally intensive one. We are able to prove that $k_3 = 2$ suffices, which implies Conjecture~\ref{mainconj} when $h = 3$; note that $\Delta_{3,k} = \{3k-1\}$.

\begin{theorem}\label{thmh3}
    If $k>2$, then \[\calr(3,k) = \{3k-2\} \cup \left[3k,\binom{k+2}{3}\right].\]
\end{theorem}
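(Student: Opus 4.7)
The endpoints and exclusion reduce the task: $3k-2 = |3A|$ for $A = \{0,1,\ldots,k-1\}$; $\binom{k+2}{3}$ is attained by any sufficiently generic (Sidon-type) set of size $k$; and $3k-1 \notin \calr(3,k)$ by Theorem~\ref{mainthm}, since for $h=3$ and $k \geq 3$ the set $\Delta_{3,k}$ consists of the single integer $3k-1$. It thus suffices to realize every $n \in [3k, \binom{k+2}{3}]$ as $|3A|$ for some $A \subset \Z$ with $|A| = k$.

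My plan is to partition $[3k, \binom{k+2}{3}]$ into overlapping subintervals, each covered by a separate parametrized family. For each $j \in \{1, 2, \ldots, k\}$, let Family $j$ consist of all sets $A = B \cup S$ with $B = \{0, 1, \ldots, k-j-1\}$, $|S| = j$, and $S \cap B = \emptyset$. Partitioning triples by how many summands lie in $S$ gives the clean decomposition
\[3A \;=\; 3B \;\cup\; (2B + S) \;\cup\; (B + 2S) \;\cup\; 3S.\]
When $S$ is generic (Sidon-like) and well-separated from $B$, the four pieces are pairwise disjoint and the four interior sizes are their maximal values, so $|3A|$ attains a value $U_j$ that one can compute explicitly: $U_1 = 6k-8$, $U_2 = 10k-20$, $U_3 = 15k - 40$, and in general $U_j = \binom{j+2}{2}k - O_j(1)$, with $U_k = \binom{k+2}{3}$. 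The plan is to show that Family~$j$ realizes every integer in some interval $[L_j, U_j]$ with $L_j \leq U_{j-1}+1$, so that $\bigcup_{j=1}^{k}[L_j, U_j]$ covers the entire target interval.

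The core technical claim would be a \emph{one-step lemma}: from any $A = B \cup S$ in Family $j$ with $|3A| = m > L_j$, there is an elementary modification of $S$ (e.g.\ translating one element by $\pm 1$, or swapping two) producing $A' = B \cup S'$ still in Family $j$ with $|3A'| = m - 1$. Iterating the lemma from a generic configuration (where $|3A| = U_j$) sweeps out every integer in $[L_j, U_j]$. The key to proving the lemma is to analyze the current overlap structure among $3B, 2B+S, B+2S, 3S$ and exhibit a controlled perturbation creating exactly one new coincidence among triple sums while destroying none.

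The main obstacle I expect is precisely this one-step lemma: at certain ``rigid'' intermediate configurations, a single perturbation may change $|3A|$ by more than $1$, and one must argue that an alternate perturbation is always available. The case analysis is cleanest for $j = 1, 2$ (where only a few pieces can interact) and becomes more intricate as $j$ grows, though the generic growth of $U_j$ in $j$ leaves comfortable room in the overlap condition $L_{j+1} \leq U_j + 1$. Finally, the base case $k = 3$ is settled by direct verification: the three sets $\{0,1,2\}, \{0,1,3\}, \{0,1,4\}$ realize $|3A| = 7, 9, 10$ respectively, exhausting $\calr(3,3) = \{7\} \cup [9,10]$.
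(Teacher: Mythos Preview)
Your plan is a reasonable-sounding decomposition, but it is not a proof: everything rests on the one-step lemma, and you have not proven it---you yourself flag it as ``the main obstacle.'' The difficulty is real and not merely technical. There is no a priori reason why, from an arbitrary configuration $A = B \cup S$ in Family~$j$ with $|3A| = m$, a single coordinate perturbation of $S$ should change $|3A|$ by \emph{exactly} one. A shift of one element of $S$ can simultaneously create and destroy many coincidences among $3B,\ 2B+S,\ B+2S,\ 3S$, and at intermediate (non-generic, non-degenerate) configurations the net effect can jump by several. Your remedy (``an alternate perturbation is always available'') is precisely the statement you would need to prove, and the case analysis for general $j$ is not sketched. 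You also have not verified the overlap condition $L_{j+1} \le U_j + 1$: you compute $U_j$ but never pin down $L_j$, and for small $j$ (where $U_j$ is only linear in $k$) there is little slack.

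The paper circumvents exactly this obstruction by tracking the \emph{pair} $(|2A|,|3A|)$ rather than $|3A|$ alone. It builds a graph $\calg_k$ whose vertices are the realizable pairs, with edges when both coordinates differ by at most $1$, and proves by induction on $k$ that the extremal vertices $(2k,3k)$ and $\bigl(\binom{k+1}{2},\binom{k+2}{3}\bigr)$ lie in the same component. The inductive step uses two explicit maps $A \mapsto A \cup \{3d\}$ and $A \mapsto A \cup \{3d+1\}$ from $(k-1)$-sets to $k$-sets, whose effect on $(|2A|,|3A|)$ is computed exactly; these maps send adjacent vertices of $\calg_{k-1}$ to a connected patch of $\calg_k$. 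A separate explicit path (Lemma~\ref{lemmanew}) handles the low end. The upshot is that the ``continuity'' argument becomes tractable once you carry $|2A|$ along as an auxiliary coordinate---your approach, by contrast, discards this information and is then forced to confront the step-by-one problem head-on.
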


In contrast with previous work, our proofs of Theorem~\ref{thmgeneralh} and Theorem~\ref{thmh3} are nonconstructive; for a single desired sumset size, our proof does not give an explicit construction attaining it. Instead, we use continuity-type arguments to prove existence. We construct a sequence of sets $A_i$ such that, even though the quantities $|hA_{i}|$ vary non-monotonically, we can guarantee that the entire set $\{|hA_i|\}$ is an interval. To study $|hA|$, we also study the distribution of the tuples $(|A|, |2A|, |3A|,\ldots,|hA|)$ for various $k$-element sets $A$. Even though we do not characterize exactly which tuples are possible, we extract enough structural information to characterize the possible values of $|hA|$. This opens up fruitful directions for new research, to control these tuples more precisely. Another key part of our construction is combining dense sets, meaning sets with small doubling, and sparse sets, which look like Sidon sets or $B_h$ sets, to get sets in the middle. 

\begin{remark}
    We can also define $N(h,k)$ to be the minimal $N \geq 0$ such that \[\calr(h,k) = \{|hA|:A \subset [0,N] \text{ and }|A| = k\}.\] By general considerations, Nathanson \cite[Theorem 8]{nathanson2025compressioncomplexitysumsetsizes} proved that $N(h,k) \leq 4(8h)^{k-1}$. Recently, \mbox{ChatGPT 5.5 Pro}, with light prompting from Timothy Gowers and the author, optimized the constructions in this paper to prove that $N(h,k)$ is polynomial in $k$. In a recent blog post \cite{GR26}, Gowers and the author describe the ChatGPT construction and intuition for why it works, detail how to show the explicit bound $N(h,k) \leq k^{10h^3}$ for $k$ sufficiently large, and provide the fully ChatGPT-written proof that $N(h,k)$ is polynomial in $k$.
\end{remark}

\subsection{Roadmap}
In Section~\ref{sectexcluding}, we prove Theorem~\ref{mainthm} by using results from \cite{Lev96} which generalize Freiman's $3k-3$ theorem to control the sizes of sumsets from the diameter of sets. We then prove Theorem~\ref{thmh3} before Theorem~\ref{thmgeneralh} because the proof of Theorem~\ref{thmh3} contains simpler versions of some of the ideas in the proof of Theorem~\ref{thmgeneralh}. In Section~\ref{secth3}, we prove Theorem~\ref{thmh3} using a nonconstructive inductive argument involving understanding the possible tuples $(|2A|,|3A|)$ for all sets $A \subset \Z$ with $|A| = k$. The bulk of the paper is in Section~\ref{sectgeneralh}, where we prove Theorem~\ref{thmgeneralh}. This is done using two separate constructions: In Section~\ref{sectlemmaquad}, we construct sets $A$ with smaller values of $|hA|$ using a partially inductive construction; in Section~\ref{sectlemmamain}, we construct sets $A$ with larger values of $|hA|$ by combining dense sets that resemble arithmetic progressions with sparse sets that resemble geometric sequences. Section~\ref{sectfuturedirections} discusses possible future directions of research and open problems.

\section{Ruling out Numbers from \texorpdfstring{$\calr(h,k)$}{R(h,k)}}\label{sectexcluding}
In this section, we will prove Theorem \ref{mainthm} quickly using results from \cite{Lev96}. We begin by defining the diameter of a set $A$. 
\begin{definition}
    For $A \subset \Z$ finite, define its \emph{diameter} $\diam(A)$ as 1 less than the minimum length of an arithmetic progression containing $A$.
\end{definition}
Since affine linear transformations of $A$ preserve both $|hA|$ and $\diam(A)$, we may always assume that $\min(A) = 0$ and $\max(A) = \diam(A)$ and $\gcd(A) = 1$. Write $A = \{0,a_1,\ldots,a_{k-1}\}$ with $a_i < a_{i+1}$ and $\gcd(a_1,\ldots,a_{k-1})=1$. Let $d \coloneq a_{k-1} = \diam(A)$. Now, we state a result from \cite{Lev96} about how $\diam(A)$ controls $|hA|$.

\begin{theorem}[\cite{Lev96}]\label{thmlev}
    For any $A \subset \Z$ with $|A|=k\geq 2$ and $d = \diam(A)$ and $h \geq 2$, \begin{equation}\label{eqlev1}
        |hA| \geq |(h-1)A| + \min\{d,h(k-2)+1\}.
    \end{equation} Let $q \in \N$ satisfy $q(k-2)+ 1 \leq d \leq (q+1)(k-2)+1$. By applying (\ref{eqlev1}) inductively, \begin{equation}\label{eqlev2}
        |hA| \geq |h\{0,1,2,\ldots,k-2,d\}| = \begin{cases}
       \frac{h(h+1)}{2}(k-2)+h+1 &\text{ if } h \leq q \\
        \frac{q(q+1)}{2}(k-2)+q+1 + (h-q)d &\text{ if } h \geq q.
    \end{cases}
    \end{equation}
\end{theorem}

When $h=2$ and $q=1$, (\ref{eqlev2}) becomes Freiman's $3k-3$ theorem (see \cite[Theorem 1.13]{Nathanson96}). Using Theorem \ref{thmlev}, we can prove Theorem \ref{mainthm}.

\begin{proof}[Proof of Theorem~\ref{mainthm}]
    If $h \in \{1,2\}$ or $k \in \{1,2\}$ then $\Delta_{h,k} = \emptyset$, so let $h,k \geq 3$. Let $A$ be a set of $k$ integers, with $d = \diam(A)$. It suffices to show that $|hA| \not \in \Delta_{h,k}$.

    \textbf{Case 1:} $d < 2k-3$.

    Let $\ell \in [0,\min\{h,k\}-3]$. Suppose $|hA| \geq m+\ell h+1 = h(k-1 +\ell)+2$. For cardinality reasons, $hA \not \subseteq [0,h(k-1+\ell)]$, so $d \geq k-\ell$. In Theorem \ref{thmlev}, we have $q = 1$ so therefore (\ref{eqlev2}) becomes \[|hA| \geq k+(h-1)d \geq k + (h-1)(k-\ell) = m+\ell h+(h-\ell-1).\] Therefore, $|hA|$ is not in the interval $[m+\ell h+1,m+ \ell h+(h-  \ell-2)]$. So $|hA| \not \in \Delta_{h,k}$.

    \textbf{Case 2:} $d \geq 2k-3$.

    Theorem \ref{thmlev}, specifically (\ref{eqlev1}), implies that $|iA| -|(i-1)A| \geq  2k-3$ for all $i \geq 2$. Applying this $h-1$ times and using that $|A| = k$, then \begin{equation}\label{eqboundhA}
        |hA| \geq k + (h-1)(2k-3) = 2hk-3h-k+3. 
    \end{equation} As $\ell \leq k-3$ for all elements of $\Delta_{h,k}$, \[\max(\Delta_{h,k}) \leq m+(k-3) h + (h-2-(k-3)) = 2hk-3h-k+2.\]
    Therefore, $|hA| > \max(\Delta_{h,k})$ so $|hA|\not \in \Delta_{h,k}$. 
\end{proof}

\section{Finding \texorpdfstring{$\calr(3,k)$}{R(3,k)}}\label{secth3}

We begin by defining a useful graph, which we will use to formulate our inductive hypothesis.

\begin{definition}
    Define the \emph{graph representation} of a set $A \subset \Z$ as $g(A)\coloneq  (|2A|,|3A|)$.
    Define the \emph{$k$-sumset graph} $\mathcal{G}_k$ to have vertex set \[ V_k = \{g(A): A \subset \Z \text{ and } |A| = k\}.\] Place an edge between distinct vertices $(x,y)$ and $(x',y')$ if $|x-x'| \leq 1$ and $|y-y'| \leq 1$.
\end{definition}

\begin{figure}
    \centering
\begin{tikzpicture}[scale=0.35]

\draw[step=1, black!20, thin] (25,60) grid ($(10,15)-(0.5,0.5)$);
\draw[step=5, black!90] (25,60) grid ($(10,15)-(0.5,0.5)$);

\draw (25,60) rectangle (10,15);

\foreach \x in {15,20,...,60} {
    \node[scale=1] at (8.5,\x) {\x};
}
\foreach \x in {10,15,...,25} {
    \node[scale=1] at (\x,13.5) {\x};
}

\node[scale=1.2] at (17.5,11) {$|2A|$};

\node[scale=1.2] at (6,37.5) {$|3A|$};

\foreach \x in {
{(11,16)}, {(15,22)},  {(16,24)}, {(16,25)}, {(17,25)}, {(15,26)}, {(17,26)}, {(15,27)}, {(17,27)}, {(15,28)}, {(17,28)}, {(18,28)},  {(17,29)}, {(18,29)}, {(19,29)}, {(17,30)}, {(18,30)}, {(19,30)}, {(17,31)}, {(18,31)}, {(19,31)}, {(17,32)}, {(18,32)}, {(19,32)}, {(17,33)}, {(18,33)},
{(19,33)}, {(17,34)}, {(18,34)}, {(19,34)}, {(18,35)}, {(19,35)}, {(18,36)}, {(19,36)}, {(20,36)}, {(18,37)},
{(19,37)}, {(20,37)}, {(18,38)}, {(19,38)}, {(20,38)}, {(18,39)}, {(19,39)}, {(20,39)}, {(18,40)}, {(19,40)},
{(20,40)}, {(19,41)}, {(20,41)}, {(19,42)}, {(20,42)}, {(21,42)}, {(19,43)}, {(20,43)}, {(21,43)}, {(19,44)},
{(20,44)}, {(21,44)}, {(20,45)}, {(21,45)}, {(20,46)}, {(21,46)}, {(20,47)}, {(21,47)}, {(20,48)}, {(21,48)},
{(20,49)}, {(21,49)}, {(20,50)}, {(21,50)}, 
{(21,51)}, {(21,52)}, {(21,53)}, {(21,54)}, {(21,55)}%
} {
    \node[blue, bigvtx] at \x {};
}

\foreach \x in {
    {(12, 18)}, {(13, 19)}, {(13, 20)}, {(14, 21)}, {(14, 22)}, {(15, 23)}, {(15, 24)},
    {(15, 25)}, {(16, 26)}, {(16, 27)}, {(16, 28)}, {(16, 29)}, {(16,30)}, {(16, 31)}, {(17,32)}, {(17,33)}, {(17,34)}, {(18,35)}, {(18,36)}, {(18,37)}, {(18,38)}, {(19,39)}, {(19,40)}, {(19,41)}, {(19, 42)}, {(19,43)}, {(19, 44)}, {(20,45)}, {(20,46)}, {(20,47)}, {(20,48)}, {(20,49)}, {(20,50)}, {(21,51)}, {(21,52)}, {(21,53)}, {(21,54)}, {(21,55)}, {(21,56)}}
{
    \node[red, bigvtx] (red\x) at \x {};
}

\draw[black, thick, line width = 1.5pt] (12, 18)--(13, 19)--(13, 20)--(14, 21)--(14, 22)--(15, 23)--(15, 24)--(15, 25)--(16, 26)--(16, 27) -- (16, 28)--(16, 29)--(16, 30)--(16, 31)--(17, 32)--(17, 33)--(17, 34)--(18, 35)--(18, 36)--(18, 37)--(18, 38)--(19, 39)--(19, 40)--(19, 41)--(19, 42)--(19, 43)--(19, 44)--(20, 45)--(20, 46)--(20, 47)--(20, 48)--(20, 49)--(20, 50)--(21, 51)--(21, 52)--(21, 53)--(21, 54)--(21, 55)--(21, 56);
\end{tikzpicture}
\caption{The blue and red points are all of the vertices of $\mathcal{G}_{6}$.  These are the possible values of ${g(A) = (|2A|,|3A|)}$ when $|A| = 6$.  The red points indicate the path constructed in the proof of Theorem~\ref{thmh3} to connect $(12,18)$ to $(21,56)$.
}\label{fig2}
\end{figure}
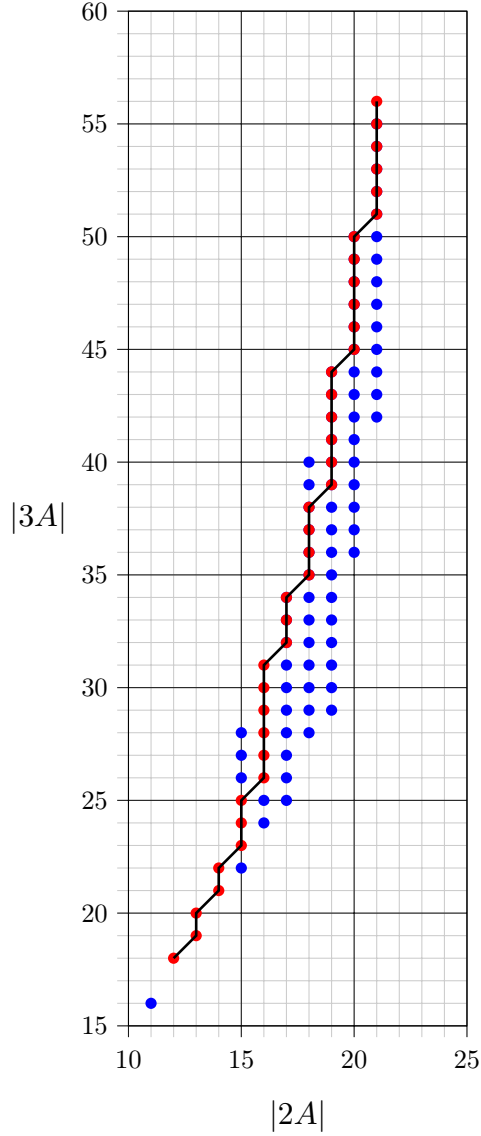

As an example, the vertex set $V_6$ is shown in Figure~\ref{fig2}. To prove Theorem~\ref{thmh3}, we will show inductively that for $k \geq 3$, the vertices $(2k,3k)$ and $\big(\binom{k+1}{2},\binom{k+2}{3}\big)$ are connected in $\calg_k$; an example path is shown in Figure~\ref{fig2}. Before starting the induction, we must prove that a particular part of the path between these vertices exists.

\begin{lemma}\label{lemmanew}
    For $k \geq 4$, the points $(2k,3k)$ and $(3k-2,6k-5)$ are connected by a path in $\calg_k$.
\end{lemma}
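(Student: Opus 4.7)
The plan is to exhibit an explicit sequence of sets $A_0, A_1, \ldots, A_N \subset \Z$ with $|A_i| = k$, $g(A_0) = (2k, 3k)$, $g(A_N) = (3k-2, 6k-5)$, and consecutive vertices adjacent in $\calg_k$. Since each edge advances $|3A|$ by at most $1$, a monotone path has length exactly $N = 3k-5$, using $k-2$ ``diagonal'' steps (advancing both coordinates by one) and $2k-3$ ``vertical'' steps (advancing only $|3A|$).

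The starting vertex is realized by $A_0 = \{0, 1, \ldots, k-2, k\}$: a block decomposition of $2A_0$ into $\{0, \ldots, k-2\}+\{0, \ldots, k-2\}$, $\{0, \ldots, k-2\}+\{k\}$, and $\{2k\}$, together with an analogous decomposition of $3A_0$, yields $g(A_0) = (2k, 3k)$. For the diagonal portion of the path I would use the family $P_c = \{0, 1, \ldots, k-2, k-1+c\}$ for $c = 1, 2, \ldots$; a careful block analysis in the appropriate range of $c$ gives $g(P_c) = (2k-1+c,\ 3k-2+2c)$, producing diagonal vertices $(2k, 3k), (2k+1, 3k+2), (2k+2, 3k+4), \ldots$ that advance by $(+1, +2)$ per step. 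To supply the intermediate vertex between $P_c$ and $P_{c+1}$ required for adjacency, I would use a small perturbation of $P_c$ (for instance swapping one AP element of $P_c$ for a neighbor outside the AP, or shifting an interior element), chosen case-by-case so that the resulting set's $g$-value is simultaneously adjacent to both $g(P_c)$ and $g(P_{c+1})$.

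The $P_c$ family and its intermediates bring the path up to roughly $(3k-3, 5k-5)$. The final segment climbs through $k$ further edges to $(3k-2, 6k-5)$: one diagonal step followed by $k-1$ vertical steps, with $|2A|$ pinned at $3k-2$ while $|3A|$ runs through $\{5k-4, 5k-3, \ldots, 6k-5\}$. The endpoint is realized by sets with a specific additive structure; for $k = 6$, for instance, $A = \{0, 1, 4, 7, 8, 11\} = \{0, 1, 4\} + \{0, 7\}$ attains $(16, 31)$. I would identify an analogous family for each $k$ and verify by direct computation that each small perturbation changes $|3A|$ by exactly one until the endpoint is reached.

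The main obstacle is this final segment. When $|2A| = 3k-2$ (one above the Freiman $3k-3$ minimum) and $|3A|$ is at or near the maximum $6k-5$, the relevant sets are rigid: generic small perturbations tend to change $|3A|$ by more than one. Identifying a family with single-unit $|3A|$ increments, verifying the sumset sizes explicitly, and checking adjacency at the junction with the $P_c$ portion will require delicate case analysis. By contrast, the $P_c$ portion and its intermediate perturbations are comparatively routine because those sets are close to arithmetic progressions and admit clean block decompositions of their sumsets.
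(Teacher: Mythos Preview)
Your overall plan---an explicit walk built from a one-parameter ``AP plus one outlier'' family for the diagonal portion, followed by a second family for the vertical climb at fixed $|2A|$---is exactly the paper's strategy, and your count of $k-2$ diagonal plus $2k-3$ vertical steps is correct. But as written this is only a sketch: the intermediate vertices between consecutive $P_c$'s are left as perturbations ``chosen case-by-case,'' and the entire vertical family is deferred to ``I would identify an analogous family for each $k$.'' More concretely, your family $P_c=[0,k-2]\cup\{k-1+c\}$ has a limitation you did not flag: once $c\ge k-1$ the three pieces $[0,2k-4]$, $[k-1+c,\,2k-3+c]$, $\{2k-2+2c\}$ of $2P_c$ separate completely and $|2P_c|$ freezes at $(2k-3)+(k-1)+1=3k-3$, one short of the target column $|2A|=3k-2$. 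So no member of the $P_c$ family ever enters that column, and the ``main obstacle'' you identify is not merely delicate---it requires a genuinely different construction, which you do not supply.

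The paper's resolution is small but decisive: replace the full progression by a punctured one, setting $A_a=\{0\}\cup[2,k-1]\cup\{k+a\}$. For $0\le a\le k-3$ this behaves like your $P_c$, giving $g(A_a)=(2k+a,\,3k+2a)$; but for $k-1\le a\le 2k-4$ the missing $1$ forces extra isolated points in $2A_a$, so that $|2A_a|=3k-2$ while $|3A_a|=4k-2+a$ increments by one. Thus a \emph{single} family handles both the diagonal and the vertical regime. The intermediate diagonal vertices come from a second punctured family $B_b=[0,1]\cup[3,k-1]\cup\{k+b\}$ with $g(B_b)=(2k+1+b,\,3k+1+2b)$, whose values interleave perfectly with the $g(A_a)$---no ad hoc perturbations are needed. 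One extra vertex $g(B_{k-2})=(3k-2,5k-4)$ (replaced by $g(\{0,1,4,5\})=(9,16)$ when $k=4$) and the terminal $g(A_{2k-2})=(3k-2,6k-5)$ stitch the two regimes together.
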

\begin{proof}
    We will explicitly construct a path $\mathcal{P}$ from $(2k,3k)$ to $(3k-2,6k-5)$. Define \[A_{a} = \{0\} \cup[2,k-1]\cup\{k+a\}, \; B_b = [0,1] \cup [3,k-1] \cup \{k+b\}.\] Then explicit computations yield that \[g(A_{a}) = (|2A_{a}|,|3A_{a}|) = \begin{cases}
        (2k+a,3k+2a) &\text{ if } 0\leq a \leq k-3 \\
        (3k-3,5k-5) &\text{ if } a = k-2 \\
        (3k-2,4k-2 + a) &\text{ if } k-1 \leq a \leq 2k-4\\
        (3k-2,6k-5) &\text{ if } a = 2k-2,
    \end{cases}\]
    and 
    \[g(B_{b}) = (|2B_{b}|,|3B_b|) = \begin{cases}
        (2k+1+b,3k+1+2b) &\text{ if } 0 \leq b \leq k-4 \\
        (3k-2,5k-4) &\text{ if } b = k-2, k >4.
    \end{cases}\]

    Also, define $B_2' = \{0,1,4,5\}$, and compute $g(B_2') = (9,16)$. Let $B_{k-2}^*$ denote $B_{k-2}$ if $k > 4$ and $B_2'$ if $k = 4$. Using the notation $g(C_1,\ldots,C_n) = g(C_1),\ldots,g(C_n)$, then we can check that \[\mathcal{P} = g(A_0,B_0,A_1,B_1,\ldots,A_{k-4},B_{k-4},A_{k-3},A_{k-2},B_{k-2}^*,A_{k-1},A_k,A_{k+1},\ldots,A_{2k-5},A_{2k-4},A_{2k-2})\] gives a path from $(2k,3k)$ to $(3k-2,6k-5)$ in $\calg_k$, as desired.
\end{proof}

We are now ready to prove Theorem~\ref{thmh3} by inducting on $k$, with inductive hypothesis the existence of a path between $(2k,3k)$ and $\big(\binom{k+1}{2},\binom{k+2}{3}\big)$ in $\calg_k$.

\begin{proof}[Proof of Theorem~\ref{thmh3}]
    We induct on $k$. Our inductive hypothesis is that the vertices $(2k,3k)$ and $\big(\binom{k+1}{2},\binom{k+2}{3}\big)$ are connected by a path in $\calg_k$. To prove Theorem~\ref{thmh3} from this, it suffices to consider a path $\mathcal{P}$ from $(2k,3k)$ to $\big(\binom{k+1}{2},\binom{k+2}{3}\big)$. Looking at the $y$-values of the vertices in $\mathcal{P}$ implies that $\left[3k,\binom{k+2}{3}\right] \subseteq \mathcal{R}(3,k)$. Combining this with Theorem~\ref{mainthm}, which excludes $3k-1$ from $\calr(3,k)$, proves Theorem~\ref{thmh3}.

    The base case is $k=3$, where it is easy to check that $g(\{0,1,3\})$ and $g(\{0,1,4\})$ form a path from $(6,9)$ to $(6,10)$ in $\calg_3$. Now, let $k > 3$. By the inductive hypothesis, assume that $(2(k-1),3(k-1))$ and $\big(\binom{k}{2},\binom{k+1}{3}\big)$ are connected in $\calg_{k-1}$. We will show that $(2k,3k)$ and $\big(\binom{k+1}{2},\binom{k+2}{3}\big)$ are connected in $\calg_k$.

    Let $A \subseteq [0,d]$ be a set with $|A| = k-1$ and $0,d \in A$. Define $A_1 = A \cup \{3d\}$ and $A_2 = A \cup \{3d+1\}$ to be sets with $|A_1| = |A_2| = k$. Then \begin{equation}\label{eqanalogy}3A_2 = 3A\cup (3d+1+2A) \cup (6d+2+A) \cup \{9d+3\}.\end{equation} Since $hA \subseteq [0,hd]$ for $h \in \{1,2,3\}$, these four sets are disjoint and \[|3A_2| = |3A|+|2A|+(k-1)+1 = |3A|+|2A|+k.\] With $A_2$ replaced by $A_1$, we have \[3A_1 = 3A\cup (3d+2A) \cup (6d+A) \cup \{9d\}.\] These four sets overlap in exactly one place, since $3d \in 3A\cap (3d+2A)$. So \[|3A_1| = |3A|+|2A|+k-1.\] By similar logic, we can check that \[|2A_1|=|2A_2| = |2A|+k.\] 
    
    For $i \in \{1,2\}$, we can define maps $\phi_i:V_{k-1} \to V_k$ by $\phi_i(g(A)) = g(A_i)$. Then \[\phi_i((x,y)) = \left(x+k,x+y+k+\begin{cases}
        -1 & \text{ if } i = 1 \\
        0 &\text{ if } i = 2 
    \end{cases}\right).\]
    For any set $S \subseteq V_{k-1}$, let \[\phi(S) = \phi_1(S) \cup \phi_2(S).\] 
    
    Let $\phi(S) \subseteq \calg_k$ refer to the induced subgraph of $\phi(S)$. Suppose $(x,y)$ and $(x',y')$ are adjacent in $\calg_{k-1}$. By checking each value of $x-x'$ and $y-y'$ in $\{-1,0,1\}$, we can deduce that the graph $\phi(\{(x,y),(x',y')\}) \subseteq \calg_k$ (consisting of up to four vertices) is connected. 
    
    Now, suppose $(x,y)$ and $(x',y')$ are connected in $\calg_{k-1}$, so there is a path $\mathcal{P}$ from $(x,y)=(x_1,y_1)$ to $(x',y')=(x_n,y_n)$ with vertices $(x_1,y_1),(x_2,y_2),\ldots,(x_n,y_n)$. So $\phi(\{(x_i,y_i),(x_{i+1},y_{i+1})\}) \subseteq \calg_k$ is connected for all $1 \leq i < n$. Therefore, \[\phi(\{(x_1,y_1),(x_2,y_2),\ldots,(x_n,y_n)\})\subseteq \calg_k\] is connected, which means that $\phi_2((x,y))$ is connected to $\phi_2((x',y'))$ within $\calg_k$.

    Let $(x,y) = (2(k-1),3(k-1))$ and $(x',y') = \big(\binom{k}{2},\binom{k+1}{3}\big)$, which are connected in $\calg_{k-1}$ by the inductive hypothesis. The previous paragraph demonstrates that $\phi_2((x,y)) = (3k-2,6k-5)$ and $\phi_2((x',y'))= \big(\binom{k+1}{2},\binom{k+2}{3}\big)$ are connected in $\calg_k$. Lemma~\ref{lemmanew} gives that $(2k,3k)$ and $(3k-2,6k-5)$ are also connected in $\calg_k$. By concatenating the paths from $(2k,3k)$ to $(3k-2,6k-5)$ and from $(3k-2,6k-5)$ to $\big(\binom{k+1}{2},\binom{k+2}{3}\big)$, we are done.
\end{proof}

\section{Finding \texorpdfstring{$\calr(h,k)$}{R(h,k)} with General \texorpdfstring{$h$}{h}}\label{sectgeneralh}

\subsection{Overview}
We introduce the statements of two main propositions, Proposition~\ref{lemmaQuadratic} and Proposition~\ref{lemmaMain}, which we will prove in Sections \ref{sectlemmaquad} and \ref{sectlemmamain}, respectively. Proposition~\ref{lemmaQuadratic} shows we can achieve the the smaller values in $\calr(h,k)$, while Proposition~\ref{lemmaMain} shows the same for the larger values. 

\begin{proposition}\label{lemmaQuadratic}
Fix $h \in \N$, and let $k > 4^hh^2$. Then \[\left[hk-h+1,\frac{k^h}{4^{2h^2}}\right]\setminus \Delta_{h,k} \subseteq \calr(h,k).\]
\end{proposition}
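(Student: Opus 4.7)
My plan is to construct, for each integer $n$ in $[hk-h+1,\,k^h/4^{2h^2}]\setminus \Delta_{h,k}$, an explicit $k$-element set $A \subset \Z$ with $|hA|=n$. The proof would proceed by induction (on $k$, with $h$ fixed), combining two different constructions that cover the low and high parts of the target interval. Throughout, set $m = hk-h+1$.

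For the lower portion (values close to $m$), I would refine Lemma~\ref{lemma1}. That lemma places $|hA|$ in $[m+r(h-1),\,m+rh]$ whenever $\diam(A)=k-1+r$, but what I really need is the converse: every value in this window is actually attained. Beginning from $A=[0,k-2]\cup\{k-1+r\}$, which attains the lower endpoint $m+r(h-1)$ by an explicit computation of the translated intervals comprising $hA$ (analogous to the computation in the proof of Lemma~\ref{lemma1}), I would perturb one element at a time — for instance, swapping a single element of the AP portion for a nearby integer — to increment $|hA|$ by $1$, sweeping through $[m+r(h-1),\,m+rh]$. Letting $r$ grow, the union of these windows covers $[hk-h+1,\,m+r_{\max}h]\setminus\Delta_{h,k}$, with $\Delta_{h,k}$ landing exactly in the gaps between consecutive windows (this is already what Lemma~\ref{lemma1} forces).

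For the larger portion (reaching up to $k^h/4^{2h^2}$), I would use a hybrid construction $A = P \cup (N \cdot Q)$, where $P=[0,k_P-1]$ is an arithmetic progression, $Q \subset \Z$ has size $k_Q = k-k_P$, and $N$ is chosen so large that the cross-level sums $N\cdot(jQ)+(h-j)P$ are pairwise disjoint as $j$ ranges over $0,\dots,h$. Then
\[|hA|=\sum_{j=0}^h|jQ|\bigl((h-j)(k_P-1)+1\bigr),\]
which reduces the problem to controlling the tuple $(|Q|,|2Q|,\dots,|hQ|)$ for a smaller set $Q$. Since $|Q|<k$, the inductive hypothesis (together with simpler explicit constructions for a few coordinates of the tuple at a time) yields a rich supply of achievable tuples, and varying both $k_Q$ and the structure of $Q$ tunes $|hA|$ across the desired range. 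Taking $k_Q$ up to a suitable constant fraction of $k$ and $Q$ close to a $B_h$ set pushes $|hA|$ up to roughly $\binom{h+k_Q-1}{h}$, which reaches $k^h/4^{2h^2}$ once $k_Q \gtrsim k/4^{2h}$.

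The main obstacle is integer-level continuity: \emph{every} integer in the target interval must be realized, not merely a dense subset. When the construction parameters change, $|hA|$ may jump by more than one, and the bulk of the work lies in arranging the windows from different parameter choices so that they overlap and glue into a single interval (minus $\Delta_{h,k}$). This is where joint control of the full sumset tuple of $Q$ becomes essential, and where the ``partially inductive'' character of the construction plays its role: one inducts not just on $|Q|$ but on the joint achievability of tuples, propagating the hypothesis for $Q$ up to $A$ via the formula above. The constant $4^{2h^2}$ in the upper bound and the threshold $k>4^hh^2$ presumably arise from losses accumulated across the iterated inductive steps and from the spacing required to ensure disjointness in the hybrid construction.
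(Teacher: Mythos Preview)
Your proposal correctly isolates the central difficulty (hitting every integer) but does not resolve it. In the lower portion, the claim that $A=[0,k-2]\cup\{k-1+r\}$ has $|hA|=m+r(h-1)$ already fails once $r>k-2$ (e.g.\ $h=2$, $k=5$, $r=4$ gives $|2A|=12\ne 13$), so this family only reaches $|hA|=O(hk)$; and the crucial step ``swap a single element to increment $|hA|$ by $1$'' is precisely the hard part and is left as an assertion---varying the outlier in $[0,k-2]\cup\{d\}$ moves $|hA|$ by up to $h$ at a time (cf.\ Lemma~\ref{lemmadense}(a)), and you give no mechanism to fill those jumps. In the upper portion, your hybrid $A=P\cup(N\cdot Q)$ expresses $|hA|$ as a linear combination of the full tuple $(|Q|,|2Q|,\dots,|hQ|)$, so hitting every integer requires an inductive hypothesis on the joint range of that tuple, which you never formulate; the proposition itself is only about $\calr(h,\cdot)$, and the joint-tuple problem (cf.\ Figures~\ref{fig2} and~\ref{fig3}) is strictly harder and is not solved in the paper for general $h$.

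The paper's proof avoids both issues with a single device you are missing: an \emph{$(h,d)$-filling} set $B\subseteq[0,d]$, meaning $hB=[0,hd]$ exactly. Lemma~\ref{lemmaauxilliary} constructs such $B$ of size $k-1$ for every $d$ up to roughly $(k-1)^h/4^{(h^2+h)/2}$, by induction on $h$ (not $k$) via a self-similar pattern $\tilde A\cup\{i\tilde d\}\cup(d-\tilde d+\tilde A)$. Since $hB$ is literally an interval, setting $A=\{0\}\cup(i+B)$ with $i\in[1,h]$ yields $hA=\{0\}\cup[i,hi+hd]$ and hence $|hA|=h(i+d)-i+2$ on the nose---no perturbation, no tuple control. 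Sweeping the two integer parameters $(d,i)$ then hits every value in the target range, with the constraint $i\le \ell+2$ (forced by $d\ge k-2$) excising exactly $\Delta_{h,k}$.
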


\begin{proposition}\label{lemmaMain}
    Fix $h \geq 4$, and $\varepsilon \in \R_{>0}$. There exists some constant $k_{h,\varepsilon} \in \N$ such that for $k > k_{h,\varepsilon}$, we have \[\left[\varepsilon k^h,\binom{h+k-1}{h}\right] \subseteq \mathcal{R}(h,k).\]
\end{proposition}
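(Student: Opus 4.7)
The plan is to realize each integer in $[\varepsilon k^h,\binom{h+k-1}{h}]$ as $|hA|$ via a ``dense plus sparse'' construction: write $A = B \cup (N + C)$, where $B$ is a dense set of size $k_1$ resembling an arithmetic progression, $C$ is a sparse set of size $k_2 = k - k_1$ resembling a $B_h$-set (e.g.\ a long geometric progression), and $N$ is a large shift. Under the separation assumption that both $N$ and the consecutive gaps of $C$ exceed $h\cdot\diam(B)$, the sumsets $jC + (h-j)B$ for $j = 0,1,\dots,h$ lie in pairwise disjoint intervals of $\Z$, and within each piece the translates of $(h-j)B$ by distinct elements of $jC$ are themselves disjoint. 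This yields the clean product formula
\[|hA| \;=\; \sum_{j=0}^{h}|jC|\cdot|(h-j)B|.\]

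The first step is a coarse sweep. Take $B=[0,k_1-1]$ and $C$ a $B_h$-set of size $k_2$, so that $|(h-j)B| = (h-j)(k_1-1)+1$ and $|jC| = \binom{k_2+j-1}{j}$. The formula then gives an explicit value $F(k_2)$ that climbs monotonically from $hk-h+1$ at $k_2=0$ to $\binom{h+k-1}{h}$ at $k_2=k$. The second step fills the gaps between consecutive $F(k_2)$'s by two perturbations: (a) apply Proposition~\ref{lemmaQuadratic} to the dense part $B$, still valid because $k_1$ can be kept above $4^hh^2$ when $k$ is large, so as to realize a long interval of values of $|hB|$; and (b) alter the sparse part $C$ by replacing a geometric-like element with a short cluster, producing controlled adjustments in the $|jC|$. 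The crucial algebraic point is that when $k_2 \geq c_\varepsilon k$ with $c_\varepsilon \approx (\varepsilon h!)^{1/h}$, the coefficient $|0C|=1$ sits in front of $|hB|$ in the product formula, so a unit change in $|hB|$ translates into a unit change in $|hA|$.

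The main obstacle is showing that the resulting collection of achievable values of $|hA|$ covers every integer in the target interval. Two technical difficulties dominate. First, perturbing $B$ alters every $|iB|$ at once, not just $|hB|$, so obtaining a genuine unit step in $|hA|$ demands finer $B$-modifications than invoking Proposition~\ref{lemmaQuadratic} as a black box: one has to swap a single interior element of $B$ for a nearby integer and track how each $|iB|$ reacts, or equivalently extract structural information about the joint tuple $(|B|,|2B|,\dots,|hB|)$, matching the paper's stated emphasis on these tuples. Second, the fine window around each coarse value $F(k_2)$ must be at least as wide as the gap $F(k_2+1)-F(k_2)$, and the lower cutoff $\varepsilon k^h$ is chosen precisely so that this holds: below it the fine window provided by Proposition~\ref{lemmaQuadratic} is too narrow compared to the coarse jump, and the hypothesis $h \geq 4$ likely enters to make the polynomial comparison between these lengths go through. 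I expect the bulk of the technical work to consist of this overlap check and of explicitly constructing the family of $B$'s whose tuples $(|2B|,\dots,|hB|)$ sweep the integers missed by the pure AP-plus-$B_h$ family.
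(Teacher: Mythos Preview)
Your high-level decomposition (dense $B$ plus sparse $C$, product formula $|hA|=\sum_j |jC|\,|(h-j)B|$) matches the paper's, which formalizes it via a disjoint-union operation and generating functions $\mathcal{F}_A(z)=\sum_\eta |\eta A|z^\eta$ satisfying $\mathcal{F}_{A\sqcup B}=\mathcal{F}_A\mathcal{F}_B$. The divergence is in how the unit-step problem you correctly flag gets resolved. Your plan is to obtain fine control by perturbing $B$, ultimately via Proposition~\ref{lemmaQuadratic}. But a change in $|iB|$ for $i<h$ is multiplied in the product formula by $|(h-i)C|$, which is polynomially large in $k$ when $C$ is a $B_h$-set; so a ``unit step'' in $|hB|$ produces a step of size at least $\Theta(k)$ in $|hA|$, not $1$. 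You acknowledge this and propose to ``swap a single interior element of $B$'' and track each $|iB|$, but that would require exhibiting a family of $B$'s whose \emph{tuples} $(|2B|,\dots,|hB|)$ move in unit $\ell^\infty$-steps, and no such family is supplied. This is the genuine gap.

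The paper's fix is to put the fine control in the \emph{sparse} part, and it does not invoke Proposition~\ref{lemmaQuadratic} at all. Instead of one sparse block, it uses $2h-3$ of them: sets $S_{m,\ell}=\{0,1,m,\dots,m^{\ell-2}\}$ and $T_{m,\ell}=\{1,m,\dots,m^{\ell-1}\}$ for $m\in[2,h]$. The key computation is that $\mathcal{F}_{S_{m,\ell}}(z)=(1-z)^{-\ell}(1-z^m)^{\ell-2}$, so incrementing the size parameter of $S_{m,\cdot}$ leaves $|iC|$ unchanged for $i<m$; in particular incrementing $s_h$ changes only $|hC|$, by exactly $1$, and this is the unit step. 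The parameters $(s_h,t_{h-1},s_{h-1},\dots,s_3,t_2,j)$ then form a graduated ladder of step sizes, and a multi-dimensional intermediate-value lemma (Lemma~\ref{lemmahypercubeIVT}: if $f$ is nonincreasing in each coordinate, the finest step is $1$, and the full range in coordinate $i-1$ dominates the maximal step in coordinate $i$, then $\mathrm{im}(f)$ is an interval) stitches them together for each fixed dense-part size $b$. The dense block $B_{j,b}$ is merely the coarsest knob. So the missing idea is a hierarchy of geometric blocks on the sparse side, where the base $m$ controls at which power of $z$ the perturbation first appears.
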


Theorem~\ref{thmgeneralh} follows immediately from these propositions.

\begin{proof}[Proof of Theorem~\ref{thmgeneralh}]
    With $h=1$, the theorem is obvious. With $h=2$, the theorem follows from \cite{ES83}. With $h=3$, the theorem follows from Theorem \ref{thmh3}. Let $h \geq 4$. Let $\varepsilon = \frac{1}{4^{2h^2}}$. Setting $k_h = \max\{k_{h,\varepsilon},4^hh^2\}$ and letting $k>k_h$, we have \[\left[hk-h+1,\binom{h+k-1}{h}\right]\setminus \Delta_{h,k} \subseteq \calr(h,k)\] by Propositions \ref{lemmaQuadratic} and \ref{lemmaMain}. By Theorem~\ref{mainthm}, $\Delta_{h,k} \cap \calr(h,k) = \emptyset$, so we are done.
\end{proof}

\subsection{Achieving Small Sumset Sizes}\label{sectlemmaquad}
In this section, we will prove Proposition \ref{lemmaQuadratic}. We begin by defining $(h,d)$-filling sets. Then, we will construct sets $A$ as the union of an $(h,d)$-filling set of size $k-1$ for some $d$ and a single number. We will prove Proposition \ref{lemmaQuadratic} by showing that varying $d$ and the single number will cause $hA$ to achieve all the desired sizes. 

\begin{definition}
    For $h \in \N$, a set $A \subseteq [0,d]$ is \emph{$(h,d)$-filling} if $hA = [0,hd]$.
\end{definition}

If $A$ is $(h,d)$-filling, then $d \in A$, so it follows that $iA = [0,id]$ for all $i > h$. Therefore, $A$ being $(h,d)$-filling implies that $A$ is $(i,d)$-filling for all $i \geq h$. Also, if $A$ is $(h,d)$-filling and $a \in [0,d]$, then $A \cup \{a\}$ is $(h,d)$-filling. We first show the existence of $(h,d)$ filling sets of size $k$ for all $d \ll \varepsilon k^h$, for some small fixed $\varepsilon>0$.

\begin{lemma}\label{lemmaauxilliary}
    Let $h \geq 2$ and $k \geq 4^{h+1}$ and \[d \in \left[k-1,\frac{k^h}{4^{(h^2+h)/2}}\right].\] Then there exists an $(h,d)$-filling set $A$ with $|A| = k$ such that $[0,\frac{k}{8}] \subset A$.
\end{lemma}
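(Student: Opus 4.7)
The plan is to construct $A = [0,t] \cup B$ with $t = \lfloor k/8 \rfloor$, where $B \subseteq (t, d]$ has $|B| = k-t-1$ and $\max B = d$, and to choose $B$ so that $hA = [0, hd]$. Since $[0,t] \subseteq A$ forces $[0, ht] \subseteq hA$ automatically, and the sumset decomposes as
\[
hA \;=\; \bigcup_{j=0}^{h} \bigl(jB + [0, (h-j)t]\bigr),
\]
the content is to design $B$ so that these $h+1$ ``layers'' cover $[0, hd]$ without gaps. The $j$-th layer is the Minkowski sum of the $j$-fold sumset $jB$ with an interval of length $(h-j)t + 1$; consecutive layers must chain together to span all of $(ht, hd]$.

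I would handle two regimes according to the size of $d$. When $d$ is close to the lower bound $k-1$, I take $B$ to be an interval, for instance $B = [ht+1,d]$. Then each $jB = [j(ht+1), jd]$ is itself an interval, so $jB + [0,(h-j)t] = [j(ht+1), jd + (h-j)t]$, and the overlap between the $j$-th and $(j{+}1)$-th layers reduces to the inequality $(j+1)(ht+1) \leq jd + (h-j)t + 1$, which is easily verified in this regime by induction on $j$.

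For larger $d$, I would build $B$ iteratively. Starting from a small $(h, d_0)$-filling seed, I form $A_{i+1}$ from $A_i$ by adjoining $M_i - 1$ translates $A_i + c_i, A_i + 2c_i, \ldots, A_i + (M_i-1)c_i$, where $c_i = h \max A_i + 1$. Because $h A_{i+1} = h A_i + h \cdot \{0, c_i, \ldots, (M_i-1) c_i\}$ and the choice of $c_i$ keeps this Minkowski sum an interval, the $(h, \max A_{i+1})$-filling property is preserved at each step. The parameters $(M_i, c_i)$ are tuned so that after a bounded number of iterations $|A| = k$ and $\max A = d$; the upper bound $d \leq k^h / 4^{h(h+1)/2}$ is chosen precisely to keep this iteration within the element budget, while the lower bound $k \geq 4^{h+1}$ gives enough room for the initial $[0,t]$ and the iteration factors.

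The main obstacle I expect is a careful bookkeeping: ensuring that the iteration terminates with $|A| = k$ exactly and $\max A = d$ exactly, while every intermediate $A_i$ remains $(h, \max A_i)$-filling. In particular, the final iteration may not land exactly on the target $d$, so one needs a small ``cleanup'' step (for instance, adjusting the last batch of translates or replacing an element) to hit the correct $\max A$ without introducing gaps in any of the layers $jB + [0, (h-j)t]$. I anticipate the calculation to be presented by induction, either on $h$ (using a weaker version of the lemma at level $h-1$) or on the iteration depth, with the explicit constant $4^{(h^2+h)/2}$ tracking the cumulative loss at each stage of the recursion.
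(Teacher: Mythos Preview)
Your iterative scheme---replicating the entire current set $A_i$ by Minkowski-summing with an arithmetic progression of step $c_i = h\,d_i+1$---does preserve the $(h,\cdot)$-filling property, but it cannot reach diameters of order $k^h$. Each step multiplies $|A_i|$ by $M_i$ while multiplying $d_i$ by at most $1+(M_i-1)h$. Since every $M_i\ge 2$, the number of steps satisfies $r \le \log_2(k/|A_0|)$, and optimising (all $M_i=2$, seed $A_0=\{0,1\}$) gives at best $d_r \le C_h\,k^{\log_2(h+1)}$. Because $\log_2(h+1)<h$ for every $h\ge 2$, the iteration misses most of the target range $\bigl[k-1,\,k^h/4^{(h^2+h)/2}\bigr]$ once $k$ is large. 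The assertion that ``the upper bound $d \le k^h/4^{h(h+1)/2}$ is chosen precisely to keep this iteration within the element budget'' is therefore not correct for the scheme you describe.

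The paper gets around this by inducting on $h$ rather than iterating at fixed $h$. Given the lemma at level $h-1$, one takes an $(h-1,\tilde d)$-filling set $\tilde A$ of size roughly $k/4$ with $\tilde d$ roughly $4d/k$, and forms
\[
A' \;=\; \tilde A \;\cup\; \{\,i\tilde d : 0\le i\le d/\tilde d\,\}\;\cup\;(d-\tilde d+\tilde A),
\]
two copies of $\tilde A$ at the ends bridged by a sparse arithmetic progression. This $A'$ is $(h,d)$-filling with at most $7k/8$ elements; padding by the smallest unused integers in $[0,d]$ brings the size to exactly $k$ and simultaneously guarantees $[0,k/8]\subset A$. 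The drop from $h$ to $h-1$ is precisely what buys an extra factor of $k$ in the reachable diameter at each level, and $4^{(h^2+h)/2}$ records the accumulated constant losses. Your closing remark about inducting on $h$ is the right instinct, but the inductive step needed is not ``replicate the whole set'' but ``place two copies of an $(h-1)$-filling building block at the ends and connect them with an AP.''
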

\begin{proof}
    Notice that $\frac{k^h}{4^{(h^2+h)/2}} \geq k$ since $k \geq 4^{h+1}$. We induct on $h$. The base case is $h = 2$.

    \begin{figure}
            \centering
            \begin{tikzpicture}[scale=0.11]
                \foreach \j in {0,...,10} {
                    \node at (\j,0)[circle,fill,inner sep=1pt]{};
                    \node at (90+\j,0)[circle,fill,inner sep=1pt]{};
                }
                \foreach \j in {2,...,8} {
                    \node at (10*\j,0)[circle,fill,inner sep=1pt]{};
                }
                \node at (0,-2){0};
                \node at (10,-2){10};
                \node at (50,-2){50};
                \node at (90,-2){90};
                \node at (100,-2){100};
            \end{tikzpicture}
            \caption{An example of $A'$ from the proof of Lemma~\ref{lemmaauxilliary} with $d = 100$.}
            \label{figA'}
            
            \end{figure}
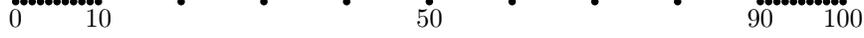

    With $h=2$, then $d \in [k-1,\frac{k^2}{64}]$. Let $\tilde{d} = \lfloor \sqrt{d}\rfloor$. Consider the set \[A' = [0, \tilde{d}] \cup \{i\tilde{d}: i \in \Z\text{ and } 0 \leq i \leq d/\tilde{d}\} \cup [d-\tilde{d},d].\] See Figure~\ref{figA'} for a picture of this construction. It is easy to check that $A'$ is $(2,d)$-filling. Because $\tilde{d} \leq \frac{k}8$, we get $|A'| < \frac{k}{2}$. Now, create $A$ from $A'$ by adding the smallest $k - |A'|$ numbers in $[0,d]\setminus A'$ to $A'$. This is possible since $d \geq k-1$. Then $A$ remains $(2,d)$-filling. Since $k - |A'|>\frac{k}{2}$, we have that $[0,\frac{k}{8}]\subseteq [0,\frac{k}{2}] \subseteq A$.
        
    Let $h>2$ and assume that the lemma holds for $h-1$. We will prove it for $h$. Let $k \geq 4^{h+1}$. If \[d \leq \frac{k^{h-1}}{4^{(h^2-h)/2}},\] then we can let $A$ be the $(h-1,d)$-filling set constructed by induction, and we are done. So assume that \[d \in \left[\frac{k^{h-1}}{4^{(h^2-h)/2}},\frac{k^{h}}{4^{(h^2+h)/2}}\right].\]
    Then \[\floor*{\frac{4d}{k}} \in \left[\frac{k^{h-2}}{4^{(h^2-h)/2-1}}-1,\frac{(k/4)^{h-1}}{4^{(h^2-h)/2}}\right].\] 
    Let $\tilde{d} =\left\lfloor\frac{4d}{k}\right\rfloor$ and let $\tilde{k} = \left\lceil \frac{k}{4} \right\rceil$, and notice $\tilde{d} \leq \tilde{k}^{h-1}/4^{(h^2-h)/2}$. If $\tilde{d} \geq \tilde{k}-1$, the inductive hypothesis supplies an $(h-1,\tilde{d})$-filling set $\tilde{A}$ with $|\tilde{A}| = \tilde{k}$. If $\tilde{d} \leq \tilde{k}-1$, let $\tilde{A} = [0,\tilde{d}]$ which is clearly $(h-1,\tilde{d})$-filling. Now, let 
    \[A' = \tilde{A} \cup \{i\tilde{d}: i \in \Z \text{ and }0 \leq i \leq d/\tilde{d}\} \cup (d-\tilde{d}+\tilde{A}).\] 
    Each of the components of this union has size at most $\frac{k}{3.5}$, so $|A'| \leq \frac{7k}{8}$. 

    Let us now show that $A'$ is $(h,d)$-filling. Since $\tilde{A}$ is $(h-1,\tilde{d})$-filling, we have \[(h-1)\left(\tilde{A} \cup  (d-\tilde{d}+\tilde{A})\right) = \bigcup_{j=0}^{h-1} \Big(j(d-\tilde{d}) + [0,(h-1)\tilde{d}]\Big).\] It is then not hard to check that $hA' = [0,hd]$. To construct $A$, add the smallest $k-|A'|$ numbers between $0$ and $d$ to $A'$, which is possible since $d \geq k-1$. Then $A$ remains $(h,d)$-filling. Because $k-|A'| \geq \frac{k}{8}$, we have  $[0,\frac{k}{8}] \subseteq A$.
\end{proof}

We are now ready to prove Proposition~\ref{lemmaQuadratic} by taking $A$ as the union of an $(h,d)$-filling set and a number.

\begin{proof}[Proof of Proposition~\ref{lemmaQuadratic}]
With $h=1$ the proposition is vacuously true, so let $h \geq 2$. We will define two parameters $d$ and $i$ to vary, and we will show that varying these parameters gives all the desired sets. 

Let $k-2 \leq d \leq \frac{(k-1)^h}{4^{(h^2+h)/2}}$. Lemma~\ref{lemmaauxilliary} supplies a set $B$ with $|B| = k-1$ such that $B$ is $(h,d)$-filling and $[0,\frac{k-1}{8}] \subseteq B$. 
Since $k>4^hh^2$, we have $[0,h^2] \subseteq B$. Let $i \in [1,h]$, and define \[A = \{0\} \cup (i+B).\] Then, since $hi + hB = [hi,hi+hd]$, we have \[hA = \{0\} \cup (i +B) \cup (2i+2B) \cup (3i + 3B)  \cup \cdots \cup [hi,hi+hd].\] If $1 \leq m \leq h$, then $(mi + mB) \subseteq [mi,mi + md]$. Since $[0,h^2]\subseteq B$, we have $[i,hi] \subseteq (i + B)$, so \[hA = \{0\} \cup [i,hi+hd].\]
Thus, $|hA| = h(i+d)-i+2$. Define $\ell$ by $i+d = \ell+k$. So the conditions $d \in \left[k-2,\frac{(k-1)^h}{4^{(h^2+h)/2}}\right]$ and $i \in [1,h]$ become \[\ell \in \left[-1,\frac{(k-1)^h}{4^{(h^2+h)/2}}+h-k\right] \text{ and } i \in \left[\max\left\{1,\ell+k-\frac{(k-1)^h}{4^{(h^2+h)/2}}\right\}, \min\{h,\ell +2\}\right].\] 
Substituting $i+d$ for $\ell + k$ gives that $|hA| = hk + h\ell -i+2$. Then, recalling that $m = hk-h+1$, for any $\ell \in \left[-1,\frac{(k-1)^h}{4^{(h^2+h)/2}}-k\right]$, we have \[\calr(h,k) \supseteq hk+h\ell-[1,\min\{h,\ell +2\}]+2 = 
m + h\ell + [\max\{1,h-\ell-1\},h].\] By plugging in the definition of $\Delta_{h,k}$, this gives that \[\left[hk-h+1,h\left(\frac{(k-1)^h}{4^{(h^2+h)/2}}-k\right)\right]\setminus \Delta_{h,k} \subseteq \calr(h,k),\] and this immediately implies Proposition~\ref{lemmaQuadratic}.
\end{proof}

\subsection{Achieving Large Sumset Sizes}\label{sectlemmamain}

\subsubsection{Overview}
In this section, we will prove Proposition~\ref{lemmaMain}. We will first define a way to combine two sets $A_1$ and $A_2$ to get a set with $|A_1| + |A_2|$ elements, which we call the disjoint union $A_1 \sqcup A_2$. Our construction of sets $A$ to fill the desired interval will be given as $A = B \sqcup C$, where $|B| = b$ and $|C| = c = k-b$, with $b \in [3,k-\floor{k^{0.7}}]$. Here, $B$ is a dense set ($|hB|$ is small for $h>1$) and $C$ is a sparse set ($|hC|$ is large for $h>1$). 

We define $B_{j,b} = [0,b-2] \cup \{h(b-2)+2-j\}$ with $j \in [1,(h-1)(b-2)+1]$ to be the union of an arithmetic progression and a single larger element. We also define the sets $S_{m,s_m} = \{0,1,m, m^2,\ldots,m^{s_m-2}\}$ and $T_{m,t_m}~=~\{1,m,m^2,\ldots,m^{t_m-1}\}$ with powers of $m$, and then construct \[C = \bigsqcup_{m=3}^h S_{m,s_m} \sqcup \bigsqcup_{m=2}^h T_{m,t_m}.\] For $m \in [3,h]$, we vary $s_m \in [2,\floor{c^{0.9}}]$ and $t_{m-1}\in [2,\floor{c^{0.8}}]$, with $t_h = c - \sum_{m=3}^h(s_m+t_{m-1})$. Let \begin{equation}\label{eqdefofA}
    A = B_{j,b} \sqcup C = B_{j,b}\sqcup\bigsqcup_{m=3}^h S_{m,s_m} \sqcup \bigsqcup_{m=2}^h T_{m,t_m}.
\end{equation} We will show that varying the set of $2h-2$ parameters $b, j, s_m, t_{m-1}$ for $m \in [3,h]$ forces $|hA|$ to achieve all the desired sizes. 

We begin by stating a key auxiliary lemma.

\begin{lemma}\label{lemmahypercubeIVT}

    Let $d\in \N$, and let $n_i \in \N$ for all $i \in [1,d]$. Let $f:[1,n_1]\times \cdots \times [1,n_d] \to \Z$ be a function which is nonincreasing in each entry. For $i \in [1,d]$, let \[\delta_i = \max (f(x_1,\ldots,x_{i-1},x_i,x_{i+1},\ldots,x_d) - f(x_1,\ldots,x_{i-1},x_i+1,x_{i+1},\ldots,x_d)) \geq 0\] and \[\Delta_i = \min (f(x_1,\ldots,x_{i-1},1,x_{i+1},\ldots,x_d) - f(x_1,\ldots,x_{i-1},n_i,x_{i+1},\ldots,x_d)) \geq 0.\] Suppose that $\delta_1 = 1$ and $\delta_{i}\leq \Delta_{i-1}$ for all $i \in [2,d]$. Then $\mathrm{im}(f)$ is an interval.
\end{lemma}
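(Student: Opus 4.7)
The plan is to induct on $d$, showing that $\mathrm{im}(f)$ equals the integer interval $[f(n_1,\ldots,n_d),f(1,\ldots,1)]$. For the base case $d=1$, since $\delta_1=1$, the nonincreasing function $f\colon[1,n_1]\to\Z$ decreases by $0$ or $1$ at each step, so it takes every integer value between $f(n_1)$ and $f(1)$.

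For the inductive step, I would fix $x_d\in[1,n_d]$ and pass to the restricted function $f_{x_d}(x_1,\ldots,x_{d-1})\coloneq f(x_1,\ldots,x_{d-1},x_d)$. Restriction can only decrease each $\delta_i$ (a max over a smaller set) and only increase each $\Delta_i$ (a min over a smaller set), so the chain $\delta_i\leq\Delta_{i-1}$ carries over to $f_{x_d}$ and the inductive hypothesis gives $\mathrm{im}(f_{x_d})=[a_{x_d},b_{x_d}]$, where $a_{x_d}=f(n_1,\ldots,n_{d-1},x_d)$ and $b_{x_d}=f(1,\ldots,1,x_d)$.

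It remains to show that $\bigcup_{x_d=1}^{n_d}[a_{x_d},b_{x_d}]$ is a single interval. Both endpoints are nonincreasing in $x_d$, so it suffices to verify $b_{x_d+1}\geq a_{x_d}$ for each $x_d$. Telescoping the range along the path $(1,\ldots,1,x_d)\to(n_1,1,\ldots,1,x_d)\to\cdots\to(n_1,\ldots,n_{d-1},x_d)$, the single step along coordinate $d-1$ alone contributes at least $\Delta_{d-1}$, so $b_{x_d}-a_{x_d}\geq\Delta_{d-1}$. On the other hand $b_{x_d}-b_{x_d+1}\leq\delta_d$ by definition. Combining with the hypothesis $\delta_d\leq\Delta_{d-1}$ yields $b_{x_d+1}\geq b_{x_d}-\delta_d\geq a_{x_d}$, so consecutive slices overlap and the union is $[f(n_1,\ldots,n_d),f(1,\ldots,1)]=[\min f,\max f]$.

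The main obstacle is the bookkeeping around quantifiers: verifying that restriction preserves the entire chain $\delta_i\leq\Delta_{i-1}$, and identifying exactly which coordinate's $\Delta$ controls the slice range (namely $\Delta_{d-1}$, the last coordinate still varying inside the slice). Once these are in place, the substantive content is just the chain $\delta_d\leq\Delta_{d-1}\leq b_{x_d}-a_{x_d}$, which forces adjacent slice-intervals to overlap rather than leave a gap.
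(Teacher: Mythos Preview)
Your proof is correct and follows essentially the same approach as the paper: both arguments induct (you on the dimension $d$, the paper on the number $i$ of varying coordinates), slice along the last varying coordinate, and show consecutive slice-images overlap via the inequality $\delta_i\le\Delta_{i-1}$. One small bookkeeping point: the lemma as stated assumes $\delta_1=1$, but after restriction you only get $\delta_1^{(x_d)}\le 1$; this is harmless since your base case actually only needs $\delta_1\le 1$, so you should phrase the inductive hypothesis with that weaker assumption.
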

To apply Lemma \ref{lemmahypercubeIVT}, let $d = 2h-3$, with \begin{equation}\label{eqxi}(x_1,x_2,x_3,\ldots,x_{d}) = (s_h-1,t_{h-1}-1,s_{h-1}-1,t_{h-2}-1,\ldots,s_{3}-1,t_2-1,j).\end{equation} Fix $b \in [3,k-\floor{k^{0.7}}]$ and $c = k-b$. Set \begin{equation}\label{eqvalni}
n_i = \begin{cases}
    \floor{c^{0.9}}-1 &\text{ if } i \text{ is odd and } i <2h-3 \\
    \floor{c^{0.8}}-1 &\text{ if } i \text{ is even} \\
    (h-1)(b-2)+1 &\text{ if } i = 2h-3.
\end{cases}\end{equation} Let $f(x_1,\ldots,x_{d}) = hA$, where $A$ is defined as in (\ref{eqdefofA}). We will then show that $hA$ is nonincreasing in each $x_i$, that $\delta_1 = 1$, and that $\Delta_{i-1} \geq \delta_i$ for $i \in [2,d]$. Let $\cala_b$ to be the set of all $A$ with $b$ fixed and $j, s_m, t_{m-1}$ for $m \in [3,h]$ varying in the prescribed intervals. By applying Lemma \ref{lemmahypercubeIVT}, the set $h\cala_b \coloneq  \{|hA|: A \in \cala_b\}$ is an interval. 

We will then show that $h\cala_b \cap h\cala_{b-1} \neq \emptyset$, meaning that the intervals $h\cala_b$ overlap for different values of $b$. Thus, with $\cala = \displaystyle \bigcup_{b=3}^{k-\floor{k^{0.7}}} \cala_b$, the set $h\cala\coloneq \{|hA|: A \in \cala\}$ is an interval. Finally, it suffices to show that there is a value smaller than $\varepsilon k^h$ in $h\cala_{k-\floor{k^{0.7}}}$ and that $\binom{h+k-1}{h} \in h\cala_3$. Since $h\cala$ is an interval, it contains the whole interval we want, which finishes the proof.

In our construction, we can think of each $x_i$ as a parameter to vary, where increasing each $x_i$ adds more relations of sums of elements of $A$, decreasing $|hA|$. For smaller values of $i$, varying $x_i$ gives finer control over the value of $|hA|$, while varying $x_i$ for larger values of $i$ gives coarser control over the value of $|hA|$.

We first prove Lemma \ref{lemmahypercubeIVT}. In Section \ref{sssection: disjointunions}, we define disjoint unions and a useful generating function, and show that the generating function behaves multiplicatively with respect to disjoint unions. We also define notions of $ O[[\cdot]]$ and $\Theta_+[[\cdot]]$ for the size of generating functions. In Section~\ref{sssection: components}, we approximate the sumset sizes of the components $B_{j,\ell}$, $S_{m,\ell}$, and $T_{m,\ell}$. In Section \ref{sssection: finish}, we show that our construction satisfies the conditions of Lemma \ref{lemmahypercubeIVT}, and then use this to prove Proposition \ref{lemmaMain}. Throughout this section, fix $h \geq 4$.

\begin{proof}[Proof of Lemma \ref{lemmahypercubeIVT}]
    We show that $f([1,n_1]\times \cdots \times [1,n_i] \times \{y_{i+1}\}\times \cdots \times \{y_d\})$ is an interval for any $i \in [1,d]$ and $y_{i+1} \in [1,n_{i+1}],\ldots,y_d\in[1,n_d]$ by induction on $i$. With $i = 1$, this is true since $\delta_1=1$ and \[f(a,y_2,\ldots,y_d) - \delta_1 \leq f(a+1,y_2,\ldots,y_d) \leq f(a,y_2,\ldots,y_d)\] for any $a\in [1,n_{1}-1].$ 
    
    Now, assume the inductive hypothesis is true for $i-1$, and we will show it to be true for $i$. Define $I_{x_{i}} \coloneq  f([1,n_1]\times \cdots \times [1,n_{i-1}] \times \{x_{i}\}\times \{y_{i+1}\}\times \cdots \times \{y_d\})$ for $x_{i} \in [1,n_{i}]$, which is an interval by our inductive hypothesis. Then \begin{equation}\label{eqIa}
        f([1,n_1]\times \cdots \times [1,n_{i-1}] \times [1,n_{i}] \times \{y_{i+1}\} \times \cdots \times \{y_d\}) = \bigcup_{a=1}^{n_{i}} I_{a}.
    \end{equation} Since $f$ is nonincreasing in each entry, $\min I_{a} \geq \min I_{a+1}$ for all $a \in [1,n_{i}-1]$. Also, by the definitions of $\delta_{i}$ and $\Delta_{i-1}$, we have \[ f(1,1,1,\ldots,n_{i-1},a,y_{i+1},\ldots,y_{d}) \leq f(1,1,1,\ldots,1,a+1,y_{i+1},\ldots,y_{d})+\delta_{i}-\Delta_{i-1}.\] The value of $f$ on the left side is in $I_a$ and the value of $f$ on the right side is in $I_{a+1}$. Since $\delta_{i}-\Delta_{i-1} \leq 0$, we have $\min I_{a} \leq \max I_{a+1}$. Hence, $I_a$ and $I_{a+1}$ overlap. So both sides of (\ref{eqIa}) are intervals, and we are done.
\end{proof}
\subsubsection{Disjoint Unions and Generating Functions}\label{sssection: disjointunions}
First, we note that working in $\Z$ and $\Z^r$ are equivalent.
\begin{lemma}\label{lemmadimension}
    For $\eta, r, \kappa \in \N$, we have \[\{(|A|,|2A|,\ldots,|\eta A|):A \subset 
    \Z^r \textnormal{ and } |A| = \kappa \} = \{(|A|,|2A|,\ldots,|\eta A|):A \subset \Z \textnormal{ and }  |A| = 
\kappa \}.\]
\end{lemma}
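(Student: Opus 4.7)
The plan is to prove the two inclusions separately. The inclusion $\supseteq$ is immediate from the embedding $\Z \hookrightarrow \Z^r$ sending $n$ to $(n,0,\ldots,0)$: this is a group homomorphism and an injection, so it preserves each of $|A|,|2A|,\ldots,|\eta A|$ for any finite $A \subset \Z$. The main content is therefore the inclusion $\subseteq$: given a finite $A \subset \Z^r$ with $|A| = \kappa$, I need to produce $A' \subset \Z$ with $|A'| = \kappa$ and $|iA'| = |iA|$ for every $i \in [1,\eta]$.

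The idea is to project $A$ to $\Z$ by a ``base-$M$'' linear map for sufficiently large $M$. Let $N = \max\{|a_j| : (a_1,\ldots,a_r)\in A,\ j\in[1,r]\}$ and fix any integer $M > 4\eta N$. Define the group homomorphism $\pi : \Z^r \to \Z$ by $\pi(x_1,\ldots,x_r) = \sum_{j=1}^r M^{j-1}x_j$, and put $A' = \pi(A)$. Since $\pi$ is a homomorphism, $\pi(iA) = i\pi(A) = iA'$ for every $i$, so it suffices to prove that $\pi$ is injective on each $iA$ with $i \in [1,\eta]$; this will yield $|iA'| = |\pi(iA)| = |iA|$, including $|A'| = \kappa$.

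For injectivity, any $x \in iA$ has entries bounded in absolute value by $iN \leq \eta N$, so for $x,y \in iA$ the difference $z = x-y \in \Z^r$ satisfies $|z_j|\leq 2\eta N < M/2$ coordinatewise. A standard greedy base-$M$ argument — letting $k$ be the largest index with $z_k \neq 0$ and comparing $M^{k-1}|z_k|$ against the geometric sum $\sum_{j<k} M^{j-1}|z_j| \leq 2\eta N \cdot \tfrac{M^{k-1}-1}{M-1}$ — shows that $\pi(z) = 0$ forces $z = 0$. Hence $\pi|_{iA}$ is injective, completing the argument.

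There is no real obstacle here; the only thing to verify is the explicit choice of $M$, and the lemma is essentially the standard fact that sumset statistics in $\Z^r$ reduce to those in $\Z$ via a generic integer linear projection.
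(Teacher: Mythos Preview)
Your proof is correct and takes essentially the same approach as the paper: both use a linear projection with sufficiently large coefficients so that the map is injective on all $iA$ for $i \le \eta$ (equivalently, is a Freiman $\eta$-isomorphism on $A$). The only cosmetic difference is that the paper reduces dimension one step at a time by induction on $r$ and invokes the term ``Freiman isomorphism,'' whereas you project $\Z^r \to \Z$ in a single step via the base-$M$ map and spell out the injectivity argument explicitly.
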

\begin{proof}
We induct on $r$. With $r = 1$ this is trivial. Given $A \subset \Z^{r}$, let $M = \max\{\|a\|_\infty:a \in A\}$. Define a function $\phi: \Z^r \to \Z^{r-1}$ to be the linear map which takes $(x_1,\ldots,x_{r-1},0) \to (x_1,\ldots,x_{r-1})$ and takes ${(0,0,\ldots,0,1) \mapsto (\lceil 3\eta M\rceil,0,0,\ldots,0)}$. Let $A' = \phi(A)$. It is easy to check that $\phi$ is a Freiman isomorphism of order $\eta$, so we have $|iA'| = |iA|$ for $1 \leq i \leq \eta$.
\end{proof}
We now define a disjoint union of sets and a generating function.

\begin{definition}\label{defDisjointUnion}
    Let $A \subset \Z^r$ and $B \subset \Z^s$ with $|A| = a$ and $|B| = b$. Define the \emph{disjoint union} $A \sqcup B$ as the set of $a+b$ points in $\Z^{r+s+2}$ given by \[A \sqcup B = (A,0,1,0) \cup (0,B,0,1) \subset \Z^r \times \Z^s \times \Z \times \Z.\]
\end{definition}

\begin{definition}\label{defGenFunc}
    Define $0A = \{0\}$. For $A \subset \Z^r$ finite, define the generating function \[\mathcal{F}_A(z) \coloneq \sum_{\eta=0}^{\infty} |\eta A|z^\eta.\]
\end{definition}
We now prove that the generating function behaves multiplicatively with respect to disjoint unions.

\begin{lemma}\label{lemmagenfunc} For $A \subset \Z^r$ and $B\subset \Z^s$ finite,
    \[\mathcal{F}_{A \sqcup B}(z) = \mathcal{F}_A(z) \cdot \mathcal{F}_B(z).\]
\end{lemma}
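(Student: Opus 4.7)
The plan is to compute $[z^\eta]\mathcal{F}_{A \sqcup B}(z)$ directly from Definition \ref{defDisjointUnion} and check that it equals the $\eta$-th coefficient of the Cauchy product. First I would unpack the construction: every element of $\eta(A \sqcup B)$ is obtained by adding $i$ vectors of the form $(a, 0, 1, 0)$ with $a \in A$ and $\eta - i$ vectors of the form $(0, b, 0, 1)$ with $b \in B$, for some $0 \leq i \leq \eta$. Summing these in $\Z^{r+s+2}$ produces a point of the shape $(x, y, i, \eta - i)$ with $x \in iA$ and $y \in (\eta - i)B$.

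The crucial observation is that the last two coordinates act as bookkeeping flags recording the number of summands of each type. Consequently, the contributions from different values of $i$ are pairwise disjoint in $\Z^{r+s+2}$. For a fixed $i$, the first $r$ coordinates range independently over $iA$ while the middle $s$ coordinates range independently over $(\eta - i)B$, contributing exactly $|iA| \cdot |(\eta - i)B|$ distinct points. Summing over $i$ gives
\[|\eta(A \sqcup B)| = \sum_{i=0}^{\eta} |iA| \cdot |(\eta - i)B|,\]
which is precisely $[z^\eta]\bigl(\mathcal{F}_A(z)\, \mathcal{F}_B(z)\bigr)$. Since this matches for every $\eta \geq 0$, the formal power series identity follows. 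The base case $\eta = 0$ is consistent since $0(A \sqcup B) = \{0\}$ and $|0A| \cdot |0B| = 1$ by Definition \ref{defGenFunc}.

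I do not anticipate any real obstacle: the disjoint-union construction was engineered precisely so that the last two coordinates force independence of the $A$-part and $B$-part of each sum, turning the sumset count into a convolution. The only thing to be careful about is to explicitly justify the disjointness of the pieces indexed by $i$ (so that no cross-cancellation inflates or deflates the count) and to handle $\eta = 0$ uniformly.
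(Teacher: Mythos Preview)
Your proposal is correct and follows essentially the same approach as the paper: both arguments observe that the last two coordinates record $(i,\eta-i)$, making the pieces indexed by $i$ disjoint, and then count each piece as $|iA|\cdot|(\eta-i)B|$ to obtain the convolution identity. Your write-up is slightly more explicit about the independence of the $A$-part and $B$-part for fixed $i$, but the core idea is identical.
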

\begin{proof}
    Elements of $\eta (A \sqcup B)$ are formed by adding up $i$ elements from $(A,0,1,0)$ and $\eta -i$ elements from $(0,B,0,1)$, which yields $|iA||(\eta -i)B|$ possible sums. Sums of $\eta$ elements with different values of $i$ cannot be equal, since they will differ in their last two coordinates. Varying the value of $i$ between $0$ and $\eta $ gives \[|\eta (A \sqcup B)| = |\eta A|+|(\eta -1)A||B| + |(\eta -2)A||2B|+\cdots+|\eta B|,\] so we are done by applying Definition \ref{defGenFunc}.
\end{proof}

Notice that $\calf_{A \sqcup(B \sqcup C)}(z) = \calf_{(A \sqcup B)\sqcup C}(z)$. Whenever taking a disjoint union of more than two sets, we do so in an arbitrary order, as in such circumstances we will only care about the data in the generating function, which we know to be equal. In particular, with $A = \displaystyle\bigsqcup_{i=1}^m {A_{i}}$, we have \[\calf_A(z) = \prod_{i=1}^m \calf_{A_{i}}(z).\]

Recall that $h$ is fixed; we now consider asymptotics as $k$ grows to infinity. Given a function $f: \N \to \Z$ and $i \in \R_{\geq 0}$, we let the expressions $f(a) \in O(a^i)$ and $f(a) \in \Theta(a^i)$\footnote{If $i <0$ then $f(a) \in O(a^i)$ or $\Theta(a^i)$ if $f(a) = 0$ for all $a \in \N$.} have their usual meanings. Say that $f(a) \in \Theta_+(a^i)$ if $f(a) \in \Theta(a^i)$ and there is some $a_0 \in \N$ such that $f(a) > 0$ if $a > a_0$. 
 Now, we define the notions of $O[[\cdot]]$ and $\Theta_+[[\cdot]]$ and $\equiv$ that we will use to approximate the sizes of generating functions.

\begin{definition}
    We say that a power series $\calf(z) = 1 + a_1z+a_2z^{2} + a_3z^3 + \cdots$ is in $O[[az]]$ (respectively $\Theta_+[[az]]$) if $a_i \in O(a^i)$ (respectively $\Theta_+(a^i)$) for $i \leq h$. For $\calg(z)= 1 + b_1z+b_2z^{2} + b_3z^3 + \cdots$ another power series, we say $\calf(z) \equiv \calg(z)$ if $a_i = b_i$ for all $i \leq h$.
\end{definition}

Note that for $\calf(z) \in O[[az]]$ or $\Theta_+[[az]]$, we require $\calf(z)$ to have constant term $a_0 = 1$. We now state a lemma about how the sizes of power series vary under multiplication.

\begin{lemma}\label{lemmaProductsofGenFuncs}
    Fix $r \in \N$. If $\calf(z) \in \Theta_+[[az]]$ and $b = o(a)$ and $\calg^{(j)}(z) \in O[[bz]]$ for all $j \in [1,r]$, then \[\calp(z) \coloneq \calf(z)\cdot \prod_{j=1}^r\calg^{(j)}(z) \in \Theta_+[[az]].\]
\end{lemma}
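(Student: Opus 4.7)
The plan is to expand the product coefficient by coefficient and show that in each coefficient up to index $h$, the contribution from $\mathcal{F}(z)$ alone dominates asymptotically, while the remaining contributions are lower order because $b = o(a)$.

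First I would write, for each $i \in [0,h]$, the coefficient $p_i$ of $z^i$ in $\calp(z)$ as the convolution
\[
p_i = \sum_{\substack{k_0,k_1,\ldots,k_r \geq 0 \\ k_0+k_1+\cdots+k_r = i}} a_{k_0} \prod_{j=1}^{r} b_{k_j}^{(j)},
\]
where $a_n$ is the $n$-th coefficient of $\calf(z)$ and $b_n^{(j)}$ is the $n$-th coefficient of $\calg^{(j)}(z)$. The case $i=0$ is immediate since $p_0 = 1$. For $i \geq 1$, I would isolate the ``main term'' corresponding to $(k_0,k_1,\ldots,k_r) = (i,0,\ldots,0)$, which equals $a_i \cdot 1 \cdot 1 \cdots 1 = a_i$, and by hypothesis this term lies in $\Theta_+(a^i)$.

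Next I would bound every other term. Since $r$ and $i \leq h$ are fixed, there are only finitely many compositions, so a term-by-term bound will suffice. For a composition with $k_0 < i$, using $a_{k_0} \in O(a^{k_0})$ and $b_{k_j}^{(j)} \in O(b^{k_j})$ we get
\[
a_{k_0} \prod_{j=1}^r b_{k_j}^{(j)} \in O\!\left(a^{k_0} b^{i-k_0}\right) = O\!\left(a^i \cdot (b/a)^{i-k_0}\right).
\]
Since $b = o(a)$ and $i - k_0 \geq 1$, each such term is $o(a^i)$. Summing the boundedly many ``off-main'' terms still gives $o(a^i)$, hence
\[
p_i = a_i + o(a^i) \in \Theta_+(a^i),
\]
where positivity for sufficiently large values of the underlying parameter follows because the main term has a positive lower bound of order $a^i$ while the correction is of smaller order. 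Finally, since this holds for every $i \in [0,h]$ and $p_0 = 1$, we conclude $\calp(z) \in \Theta_+[[az]]$.

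I do not expect a major obstacle here: the argument is essentially that a single dominant term in a finite convolution controls the asymptotics. The only subtlety is the positivity aspect of $\Theta_+$, which could fail if the sub-leading terms were allowed to be negative and large in magnitude; but the $O(a^{k_0} b^{i-k_0})$ bound together with $b = o(a)$ makes them negligible. The finiteness of the number of factors and of $i \leq h$ is what keeps the error control uniform and avoids any combinatorial blow-up.
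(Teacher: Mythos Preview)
Your argument is correct and rests on the same key observation as the paper's proof: in each coefficient $p_i$, the single term $a_i$ coming from $k_0=i$ dominates, and every other summand carries at least one factor of $b/a$ and is therefore $o(a^i)$. The only difference is presentational: the paper proceeds by induction on $r$, reducing to the two-factor case $\calp'(z)\calg^{(r)}(z)$ at each step, whereas you expand the full $(r{+}1)$-fold convolution directly and bound all off-main terms simultaneously. Your direct expansion is arguably cleaner since the number of compositions of $i\le h$ into $r{+}1$ parts is a fixed constant, so no induction is needed; the paper's induction just avoids the multi-index notation. Either way the $\Theta_+$ conclusion follows exactly as you argue, with positivity guaranteed because the $o(a^i)$ correction is eventually smaller than the positive lower bound on $a_i$.
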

\begin{proof}
    We induct on $r$. First let $r = 1$. Let $\calf(z) = 1 +a_1z+a_2z^{2} + a_3z^3 + \cdots$ for $a_i \in \Theta_+(a^i)$. Let $\calg^{(1)}(z) = 1 +b_1z+b_2z^{2} + b_3z^3 + \cdots$ for $b_i \in O(b^i)$. Let $\calp(z) = 1+p_1z+p_2z^2+p_3z^3 + \cdots $. For $\eta \leq h$, we have \[p_\eta = \sum_{\nu=0}^\eta a_\nu b_{\eta-\nu}.\] The fastest growing term will be $a_\eta \in \Theta_+(a^\eta)$. All other terms will be in $O(a^\eta) \cdot \frac{b}{a}$, and will thus be dwarfed by $a_\eta$. Thus, $p_{\eta} \in \Theta_+(a^\eta)$. 

    Now, assume the statement is true for $r -1$. Let \[\calp'(z) \coloneq \calf(z)\cdot \prod_{j=1}^{r-1}\calg^{(j)}(z) \in \Theta_+[[az]]\] by the inductive hypothesis. By repeating the argument of $r = 1$ on $\calp(z) = \calp'(z)\calg^{(r)}(z)$, we get that $\calp(z) \in \Theta_+[[az]]$.
\end{proof}

\subsubsection{Sumset Sizes of \texorpdfstring{$B_{j,\ell}$}{Bjl}, \texorpdfstring{$S_{m,\ell}$}{Sml}, and \texorpdfstring{$T_{m,\ell}$}{Tml}}\label{sssection: components}
We begin with our construction of dense sets $B_{j,\ell}$, which are given by the union of an arithmetic progression and an integer, and state their properties. 
\begin{definition}
    Let $\ell \geq 3$, and $s = (h-1)(\ell-2)+1$. Define the set $B_{j,\ell} \coloneq  [0,\ell-2] \cup \{h(\ell-2)+2-j\}$ for $j \in [1,s]$.
\end{definition}
\begin{lemma}\label{lemmadense}
    For $\ell \geq 3$, the sets $B_{1,\ell},\ldots,B_{s,\ell}$ satisfy the following: \begin{enumerate}[label = (\alph*)]
        \item $0 \leq |\eta B_{j-1,\ell}|-|\eta B_{j,\ell}| \leq h$ for all $j \in [2,s]$ and $\eta \in [2,h]$;
        \item $|\eta B_{j,\ell}| < h^2\ell$ for all $j \in [1,s]$ and $\eta \in [2,h]$; 
        \item $\calf_{B_{1,3}}(z) \equiv (1-z)^{-3}.$
        \item If $\ell >3$, then $\calf_{B_{1,\ell}}(z) \equiv (1-z)^{-1}\calf_{B_{s,\ell-1}}(z).$
    \end{enumerate}
\end{lemma}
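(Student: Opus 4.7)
The plan is to exploit the explicit structure of $B_{j,\ell}$ as an arithmetic progression $[0, \ell-2]$ augmented by a single point $p_j = h(\ell-2) + 2 - j$. A sum of $\eta$ elements decomposes by how many copies of $p_j$ it uses: if $k$ copies appear, the remaining $\eta - k$ summands from $[0, \ell-2]$ range over every value in $[0, (\eta-k)(\ell-2)]$. This gives the key decomposition
\[
\eta B_{j, \ell} = \bigcup_{k=0}^{\eta} I_k^{(j)}, \qquad I_k^{(j)} = \bigl[k p_j,\; k p_j + (\eta-k)(\ell-2)\bigr].
\]
Since $p_j \geq \ell - 1 > \ell - 2$, both endpoints of $I_k^{(j)}$ strictly increase with $k$, so these $\eta+1$ intervals are automatically pre-sorted.

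Parts (b), (c), and (d) will follow quickly from this decomposition. For (b), I will use the crude bound $|\eta B_{j,\ell}| \leq \eta \max(B_{j,\ell}) + 1 \leq \eta(h(\ell-2) + 1) + 1 < h^2 \ell$ for $h \geq 2$. For (c), substituting $\ell = 3$ gives $B_{1,3} = \{0, 1, h+1\}$, so $\eta B_{1, 3} = \{a + b(h+1) : a + b \leq \eta,\; a, b \geq 0\}$; since $h + 1 > \eta$, these pairs give distinct values, yielding $|\eta B_{1, 3}| = \binom{\eta + 2}{2} = [z^\eta](1-z)^{-3}$. For (d), I will check that for $B_{1, \ell}$ the gap between $I_k^{(1)}$ and $I_{k+1}^{(1)}$ equals $(h - \eta + k)(\ell - 2) \geq 0$, so these intervals are pairwise disjoint and $|\eta B_{1, \ell}| = \sum_{k=0}^\eta \bigl((\eta-k)(\ell-2) + 1\bigr) = (\ell-2)\binom{\eta+1}{2} + (\eta+1)$. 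On the other hand $B_{s, \ell-1} = [0, \ell - 2]$ is an arithmetic progression of length $\ell - 1$ with $|\nu B_{s, \ell-1}| = \nu(\ell-2) + 1$, so $[z^\eta]\,(1-z)^{-1}\calf_{B_{s,\ell-1}}(z) = \sum_{\nu=0}^\eta(\nu(\ell-2)+1)$, which matches.

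The hard part will be (a), and this is where I would spend most of the effort. Using the decomposition, I can write $|\eta B_{j, \ell}| = |I_0| + \sum_{k=1}^\eta r_k(j)$, where $r_k(j) = \bigl|I_k^{(j)} \setminus \bigcup_{k' < k} I_{k'}^{(j)}\bigr|$ is the ``new contribution'' of $I_k^{(j)}$. Because $\max I_k^{(j)}$ is strictly increasing in $k$, the residual collapses to $r_k(j) = \min\bigl((\eta-k)(\ell-2)+1,\, p_j - (\ell-2)\bigr)$: the first branch applies when $I_k^{(j)}$ is disjoint from $I_{k-1}^{(j)}$ (equivalently $p_j > (\eta - k + 1)(\ell - 2)$), and the second when they overlap. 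As $j$ increases by one, $p_j$ drops by one, and a short case check shows that $r_k$ either stays unchanged (if $k$ remains in the disjoint regime) or drops by exactly one (both in the overlap-to-overlap case and in the transitional disjoint-to-overlap case, using the identity $(\eta-k)(\ell-2) + 1 = p_j - (\ell-2) + 1$ at the boundary $p_j = (\eta - k + 1)(\ell - 2)$). Summing, the total drop $|\eta B_{j-1, \ell}| - |\eta B_{j, \ell}|$ equals the number of indices $k \in [1, \eta]$ in the overlap regime at stage $j$, which lies in $[0, \eta] \subseteq [0, h]$. This establishes both bounds in (a); the delicate piece is just the boundary transition, but the arithmetic works out cleanly once the formula for $r_k$ is in hand.
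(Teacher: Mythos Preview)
Your argument is correct. The interval decomposition $\eta B_{j,\ell}=\bigcup_{k=0}^{\eta}I_k^{(j)}$ is exactly the mechanism behind the paper's proof as well: for (d) the paper writes the same union (indexed by the number of copies of $d=p_1$) and observes disjointness when $\eta\le h$, and for (a)--(c) the paper simply cites \cite[Theorem~1.11]{Nathanson96}, whose proof is precisely this ``count by multiplicity of the extra point'' computation you carry out. So the approaches coincide; the difference is only that you reprove the cited theorem in situ rather than invoking it, and your residual formula $r_k(j)=\min\bigl((\eta-k)(\ell-2)+1,\,p_j-(\ell-2)\bigr)$ makes the monotonicity in (a) especially transparent.
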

\begin{proof}
    An exact formula for $|\eta B_{j,\ell}|$, due to \cite{Lev96}, is given in (\ref{eqlev2}). This immediately implies (a). For (b), notice that $B_{j,\ell} \subseteq [0,h(\ell-1)]$ and $\eta \leq h$, so $\eta B_{j,\ell} \subseteq [0,h^2(\ell-1)]$. Therefore $|\eta B_{j,\ell}| < h^2\ell$. For (c), the binomial theorem gives $(1-z)^{-3} = \sum_{\eta = 0}^\infty \binom{\eta+2}{2}z^\eta$. By definition, $B_{1,3} = \{0,1,h+1\}$. For $1 \leq \eta \leq h$, the identity $|\eta B_{1,3}| = \binom{\eta+2}{2}$ follows from setting $q$ to be $h$ and $h$ to be $\eta$ and $k$ to be 3 in (\ref{eqlev2}).  
    
    For part (d), let $d = h(\ell-2)+1$. Then $B_{s,\ell-1} = [0,\ell-2]$ and $B_{1,\ell} = [0,\ell-2] \cup \{d\}$. Let $\eta \in [1,h]$. In a similar way to (\ref{eqanalogy}), \begin{equation}\label{eqtempor}
        \eta B_{1,\ell} = \bigcup_{\nu = 0}^\eta 
        \big((\eta-\nu)d+ \nu B_{s,\ell-1}\big). 
    \end{equation} Since $d > h(\ell-2)$, the sets on the right side of (\ref{eqtempor}) are disjoint, so $|\eta B_{1,\ell}| = \sum_{\nu = 0}^\eta|\nu B_{s,\ell-1}|$. Plugging in that $(1-z)^{-1} = (1+z+z^2+\cdots)$ yields (d).
\end{proof}

Now, we define a set $S_{m,\ell}$ to be a geometric series which includes 0, has common ratio $m$, and has $\ell$ elements. We then bound its generating function $\calf_{S_{m,\ell}}(z)$.

\begin{definition}
    For $m\geq 2$ and $\ell \geq 2$, define the set $S_{m,\ell}\coloneq  \{0,1,m,m^2,\ldots,m^{\ell-2}\}$.
\end{definition}

\begin{lemma}\label{lemmaHSML}
    For $m\geq 2$ and $\ell \geq 2$, we have 
    \[\calf_{S_{m,\ell}}(z) = \frac{1-(\ell-2)z^mO[[\ell z]]}{(1-z)^\ell}.\]
\end{lemma}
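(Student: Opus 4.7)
The plan is to first derive the exact generating function
\[
\calf_{S_{m,\ell}}(z) = \frac{(1-z^m)^{\ell-2}}{(1-z)^\ell}
\]
via a canonical-representation bijection, and then repackage the numerator in the form required by the lemma through a binomial expansion.

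For the first step, observe that each sum $v \in \eta S_{m,\ell}$ has the shape $v = c_1 + d_1 m + d_2 m^2 + \cdots + d_{\ell-2} m^{\ell-2}$ with $c_1, d_i \in \Z_{\geq 0}$, where $c_1$ counts the $1$s and $d_i$ counts the $m^i$s in the multiset of summands, plus a further $c_0 = \eta - (c_1 + \sum d_i)$ copies of $0$. Among all such representations of a fixed $v$ there is a unique \emph{canonical} one in which $c_1, d_1, \ldots, d_{\ell-3} \in [0, m-1]$ are the low base-$m$ digits of $v$, with $d_{\ell-2}$ absorbing the rest. Any other representation arises from the canonical by applying ``uncarries'' $(c_1 \mapsto c_1+m,\, d_1 \mapsto d_1-1)$ or $(d_i \mapsto d_i+m,\, d_{i+1} \mapsto d_{i+1}-1)$ for $i \in [1,\ell-3]$, each of which strictly increases the term count $c_1 + \sum d_i$ by $m-1 \geq 1$. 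Hence the canonical representation uniquely achieves the minimum term count $\tau(v)$, and $v \in \eta S_{m,\ell}$ iff $\tau(v) \leq \eta$ (pad the canonical representation with $\eta - \tau(v)$ copies of $0$).

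Next I would translate this bijection into generating functions. The $\ell-2$ bounded digits $c_1, d_1, \ldots, d_{\ell-3}$ each contribute a factor $\frac{1-z^m}{1-z}$, the unbounded $d_{\ell-2}$ contributes $\frac{1}{1-z}$, and the pad variable $c_0 \geq 0$ (equivalently, summing $|\eta S_{m,\ell}| = \#\{v : \tau(v) \leq \eta\}$ over $\eta$) contributes another $\frac{1}{1-z}$, yielding the displayed formula above. The binomial theorem then gives $(1-z^m)^{\ell-2} = 1 - (\ell-2)z^m + \sum_{k \geq 2}(-1)^k \binom{\ell-2}{k}z^{km}$, which factors as $1 - (\ell-2)z^m G(z)$ with $G(z) = \sum_{k \geq 0}(-1)^k \frac{\binom{\ell-2}{k+1}}{\ell-2} z^{km}$. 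To check $G \in O[[\ell z]]$, note that $G(0)=1$; for $k \geq 1$ the coefficient of $z^{km}$ is bounded in absolute value by $\binom{\ell-2}{k+1}/(\ell-2) \leq \ell^k \leq \ell^{km}$ (using $m \geq 2$); and all other coefficients vanish.

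The main obstacle is really just the minimality and uniqueness of the canonical base-$m$ representation; once that is clear, the rest is routine algebra with binomial coefficients and power series. In particular, one must verify carefully that every representation of $v$ using at most $\eta$ summands can be ``canonicalized'' by a sequence of carries that reduces the term count at each step, so that $\tau(v)\leq \eta$ is truly equivalent to $v \in \eta S_{m,\ell}$.
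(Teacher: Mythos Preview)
Your proof is correct and follows essentially the same approach as the paper: both identify $\eta S_{m,\ell}$ with the set of nonnegative integers whose base-$m$ ``digit sum'' (with the top digit unbounded) is at most $\eta$, translate this into a balls-in-boxes count giving the closed form $\calf_{S_{m,\ell}}(z) = (1-z^m)^{\ell-2}/(1-z)^\ell$, and then expand the numerator via the binomial theorem. Your write-up is slightly more explicit about why the canonical representation minimizes the term count and about verifying the $O[[\ell z]]$ bound on the resulting series, but the underlying argument is the same; one small point is that your extraction of $G(z)$ divides by $\ell-2$ and so does not literally cover $\ell=2$, though that case is trivial since the numerator is identically $1$.
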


\begin{proof}
    Let $\eta \in \N$. The elements in $\eta S_{m,\ell}$ can be viewed as numbers $n$ where $\floor{\frac{n}{m^{\ell-2}}}$ plus the sum of the base-$m$ digits of $n \pmod{m^{\ell-2}}$ is less than or equal to $\eta$. To construct such an $n$, construct $n$ with $\floor{\frac{n}{m^{\ell-2}}}$ copies of $m^{\ell-2}$, and then use the base expansion of $n \pmod{m^{\ell-2}}$ in base $m$. This is a general construction of all elements $n$ in $\eta S_{m,\ell}$. 

    This gives an expression of $|\eta S_{m,\ell}|$ as the number of ways to put $\eta$ balls in $\ell$ boxes, where $\ell-2$ of the boxes have a maximum number of $m-1$ balls, and $2$ of the boxes have no maximum. The $\ell-2$ boxes correspond to the number of $1$'s, $m$'s, $\ldots$, $m^{\ell-3}$'s in the base-$m$ expansion of $n$. These numbers are bounded between $0$ and $m-1$. One infinite-capacity box corresponds to the number of $m^{\ell-2}$'s in $n$, and the other corresponds to the `overflow', or the number of $0$'s in $n$. In total, $n$ must be formed by exactly $\eta $ balls, as it is a sum of $\eta $ elements of $S_{m,\ell}$. This gives a generating function formula 
    \[\calf_{S_{m,\ell}}(z) = (1+z+z^2+\cdots+z^{m-1})^{\ell-2}(1+z+z^2+\cdots)^2 = \frac{(1-z^m)^{\ell-2}}{(1-z)^\ell}.\]
    Expanding the numerator $(1-z^m)^{\ell-2}$ out using the binomial theorem yields the desired result.
\end{proof}

Now, we will define a similar set $T_{m,\ell}$, which is a geometric series with common ratio $m$ and $\ell$ elements which does not include 0. With a bit more work, we prove a similar result about the generating function $\calf_{T_{m,\ell}}(z)$. 

\begin{definition}
    For $m\geq 2$ and $\ell  \geq 2$, define the set $T_{m,\ell}\coloneq  \{1,m,m^2,\ldots,m^{\ell-1}\} = S_{m,\ell+1} \setminus \{0\}.$
\end{definition}

\begin{lemma}\label{lemmaHTML}
    For $m\geq 2$ and $\ell \geq 2$, we have 
    \[\calf_{T_{m,\ell}}(z) = \frac{1-\binom{\ell-1}{2}z^{m+1}O[[\ell z]]}{(1-z)^\ell}.\]
\end{lemma}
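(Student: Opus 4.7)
The plan is to derive an exact closed form for $\calf_{T_{m,\ell}}(z)$ via a bijective counting argument, and then extract the claimed asymptotic shape by algebraic manipulation. First I would observe that any representation $n = \sum_{j=0}^{\ell-1} d_j m^j$ with $d_j \geq 0$ and $\sum d_j = \eta$ can be obtained from the canonical base-$m$ expansion $(e_0,\ldots,e_{\ell-2},q)$ (with $e_j \in [0,m-1]$ and $q = \lfloor n/m^{\ell-1}\rfloor$) by some sequence of ``splits,'' each of which replaces one copy of $m^{j+1}$ by $m$ copies of $m^j$; such splits preserve $n$ but increase $\sum d_j$ by exactly $m-1$. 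A greedy top-down allocation then shows that the total number of splits $k$ can take any value in $[0, K(e,q)]$, where $K(e,q) := qM_{\ell-1} + \sum_{j=1}^{\ell-2} e_j M_j$ with $M_j := 1 + m + \cdots + m^{j-1}$. Hence pairs $(n,\eta)$ with $n \in \eta T_{m,\ell}$ are in bijection with tuples $(e_0,\ldots,e_{\ell-2},q,k)$ satisfying the above constraints, with $\eta = q + \sum e_j + k(m-1)$.

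Summing $z^\eta$ over these tuples, evaluating the inner geometric series in $k$, and recognizing the two resulting outer sums as $[z]_m^{\ell-1}/(1-z)$ and $1/(1-z)$ respectively (where $[z]_m := 1 + z + \cdots + z^{m-1}$, using that $(e,q)$ biject with the nonnegative integers via base-$m$ expansion), gives the exact closed form
\[\calf_{T_{m,\ell}}(z) = \frac{[z]_m^{\ell-1} - z^{m-1}}{(1-z)(1-z^{m-1})} = \frac{(1-z^m)^{\ell-1} - z^{m-1}(1-z)^{\ell-1}}{(1-z)^\ell(1-z^{m-1})}.\]
Expanding the numerator binomially: the $j=0$ contribution is $1 - z^{m-1}$, the $j=1$ contribution cancels, and for $j \geq 2$ the identity $z^{mj} - z^{m+j-1} = -z^{m+j-1}(1 - z^{m-1}) \sum_{i=0}^{j-2} z^{i(m-1)}$ lets the denominator factor $1 - z^{m-1}$ cancel cleanly, yielding
\[N(z) := \frac{(1-z^m)^{\ell-1} - z^{m-1}(1-z)^{\ell-1}}{1-z^{m-1}} = 1 - \sum_{j=2}^{\ell-1} (-1)^j \binom{\ell-1}{j} z^{m+j-1}\sum_{i=0}^{j-2} z^{i(m-1)}.\]

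Isolating the leading $j=2$ correction, one writes $N(z) = 1 - \binom{\ell-1}{2} z^{m+1} F(z)$, where the coefficient of $z^i$ in $F(z)$ is a sum of $O(1)$ ratios of the form $\pm\binom{\ell-1}{j}/\binom{\ell-1}{2} = \Theta(\ell^{j-2})$ with $j - 2 \leq i$, hence $O(\ell^i)$, so $F(z) \in O[[\ell z]]$. The main obstacle is the careful combinatorial verification that every split count $k \in [0, K(e,q)]$ is indeed realized by some allocation $(k_1,\ldots,k_{\ell-1})$ respecting the non-negativity constraints $d_j \geq 0$ at every stage; granting this, the closed-form computation and the extraction of the $O[[\ell z]]$ shape are routine algebra, echoing the structure of Lemma~\ref{lemmaHSML}.
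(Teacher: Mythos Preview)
Your proof is correct and arrives at the identical closed form
\[
\calf_{T_{m,\ell}}(z)=\frac{(1-z^m)^{\ell-1}-z^{m-1}(1-z)^{\ell-1}}{(1-z)^\ell(1-z^{m-1})}
\]
that the paper obtains, and your extraction of the $O[[\ell z]]$ shape via the factorization $z^{mj}-z^{m+j-1}=-z^{m+j-1}(1-z^{m-1})\sum_{i=0}^{j-2}z^{i(m-1)}$ is cleaner and more explicit than the paper's one-line ``expanding yields the desired result.''

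The combinatorial setups are closely related but organized differently. The paper reduces an arbitrary representation to its unique base-$m$ normal form, counts normal-form tuples weighted by $z^\eta$, and then subtracts a separately computed correction $\sum_\eta\lfloor\eta/(m-1)\rfloor z^\eta$ for the small values $n<\eta$ that cannot be ``unsplit'' back up; this correction must then be simplified to $z^{m-1}/\bigl((1-z)(1-z^{m-1})\bigr)$ by hand. Your bijection with tuples $(e,q,k)$, $k\in[0,K(e,q)]$, absorbs that correction automatically: the finite geometric sum in $k$ produces exactly the term $-z^{m-1}\sum_n z^n$ as the upper endpoint contribution, so no separate overcount analysis is needed. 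Both routes rest on the same key fact (every split count between $0$ and the maximum is realizable), which you correctly flag as the only nontrivial combinatorial step; your top-down greedy justification of this is adequate.
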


\begin{proof}
    Let $\eta \in \N$. We will approximate $|\eta T_{m,\ell}|$ by putting sums of elements of $T_{m,\ell}$ into a standardized form. Say that a representation $n = a_0 +a_1m + a_2m^2 + \cdots + a_{\ell-1}m^{\ell-1}$ is in $r$-\emph{normal form} if $a_0,\ldots,a_{\ell-2}$ are in $[0,m-1]$ and $a_{\ell-1} \geq 0$ and $a_0+\cdots + a_{\ell-1} = r$. By taking the base-$m$ expansion of $n$, every $n \in \N$ can be written in normal form uniquely.

    Each element $n \in \eta T_{m,\ell}$ can be written as $n = a_0 +a_1m + a_2m^2 + \cdots + a_{\ell-1}m^{\ell-1}$, where $a_i \geq 0$ and $a_0+\cdots + a_{\ell-1} = \eta $. This representation of $n$ might not be unique, so we will put $n$ into normal form. 
 If $a_i \geq m$ for some $i < \ell -1$, we can decrement $a_i$ by $m$ and add $1$ to $a_{i+1}$ without changing the value of $n$; after this procedure then $a_0+\cdots + a_{\ell-1} = \eta -(m-1)$. Iterating this whenever possible, we will eventually get that $a_0,\ldots,a_{\ell-2}$ are in $[0,m-1]$ and $a_{\ell-1}$ is any nonnegative integer; if we have done this $j$ times, we get the sum $a_0+\cdots + a_{\ell-1} = \eta -j(m-1)$. Thus, any element $n \in \eta T_{m,\ell}$ can be put into $\eta -j(m-1)$-normal form for some $j \geq 0$, and this form is unique.

    Now, we reverse this process. Consider an element $n$ in $\eta -j(m-1)$-normal form. To pull back $n$ into $\eta T_{m,\ell}$, we need to decrement $a_i$ by 1 and increment $a_{i-1}$ by $m$ for $j$ different nonzero values of $i$. This is possible as long as we never reach a state where $a_i = 0$ for all $i>0$, which would mean $n$ is represented as a sum of $\eta -j'(m-1)$ copies of 1. This will only happen when $n = \eta -j'(m-1)$ for $j' \geq 1$. This rules out $\floor{\frac{\eta }{m-1}}$ numbers. Therefore, $|\eta T_{m,\ell}|$ is equal to the number of elements $n$ in $\eta -j(m-1)$-normal form for any $j \geq 0$, minus $\floor{\frac{\eta }{m-1}}$.
    
    We now create the generating function $\calf_{T_{m,\ell}}$. Elements in $r$-normal form can be counted as the number of ways to put $r$ balls in $\ell$ boxes, where the number of balls in box $i$ corresponds to $a_i$. There are $\ell-1$ boxes which have a maximum of $m-1$ balls, and $1$ box with no maximum number of balls. Then, multiplying by $(1+z^{m-1}+z^{2(m-1)}+\cdots)$ to account for elements in $\eta -j(m-1)$-normal form, and subtracting $\floor{\frac{\eta }{m-1}}$, gives the generating function:

   \begin{align*}
        \calf_{T_{m,\ell}}(z) &= (1+z+z^2+\cdots+z^{m-1})^{\ell-1}(1+z+z^2+\cdots)(1+z^{m-1}+z^{2(m-1)}+\cdots) - \sum_{\eta =0}^\infty \floor*{\frac{\eta }{m-1}} z^\eta \\&= \frac{(1-z^m)^{\ell-1}}{(1-z)^\ell(1-z^{m-1})}- \sum_{\eta =0}^\infty \floor*{\frac{\eta }{m-1}} z^\eta .
    \end{align*}
    We can then simplify \[\sum_{\eta =0}^\infty \floor*{\frac{\eta }{m-1}} z^\eta  = z^{m-1}(1+z+z^2+\cdots)(1+z^{m-1}+z^{2(m-1)} + \cdots) = \frac{z^{m-1}}{(1-z)(1-z^{m-1})}.\] Plugging this into our formula for $\calf_{T_{m,\ell}}(z)$ gives \[\calf_{T_{m,\ell}}(z) = \frac{(1-z^m)^{\ell-1}-z^{m-1}(1-z)^{\ell-1}}{(1-z)^\ell(1-z^{m-1})} = \frac{1-z^{m-1} -\binom{\ell-1}{2}z^{m+1} +\sum_{i=3}^{\infty}O(\ell^i)z^{i+m-1}}{(1-z)^\ell(1-z^{m-1})}.\] Expanding the left side of this out for $\ell = 2$ and the right side out for $\ell > 2$ yields the desired result.
\end{proof}

\subsubsection{Putting it All Together}\label{sssection: finish}
We begin by defining a notion of a family of sets being connected.
\begin{definition}
    A set of sets $\cala = \{A_1,\ldots,A_r\}$ with all $|A_i| = k$ is \emph{connected} if the set of sumset sizes $h\cala \coloneq  \{|hA|:A \in \cala\}$ forms an interval.
\end{definition}
Now, using all of our work in Section \ref{sssection: components} to understand the component sets, we prove that the set of all $A$, with $b$ fixed and $j,s_m,t_{m-1}$ varying for $m \in [3,h]$, is connected.
\begin{lemma}\label{lemmaAb}
    Fix $h \geq 4$. Fix $k$ to be sufficiently large, fix $b \in [3,k-\floor{k^{0.7}}]$, and let $c = k-b$. Let $B_j\coloneq B_{j,b}$. Define the family of sets \[\cala_b = \left\{A = B_j \sqcup \bigsqcup_{m=3}^h S_{m,s_m} \sqcup \bigsqcup_{m=2}^h T_{m,t_m}:\substack{j \in [1,(h-1)(b-2)+1] \\ s_m \in [2,\floor{c^{0.9}}] \textnormal{ if }m \in [3,h] \\ t_{m-1} \in [2,\floor{c^{0.8}}]\textnormal{ if } m \in [3,h] \\
    t_h = c-\sum_{m=3}^h(s_m+t_{m-1})} 
    \right\}.\] Then $\cala_b$ is connected.
\end{lemma}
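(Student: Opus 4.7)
The plan is to apply Lemma~\ref{lemmahypercubeIVT} to $f(x_1,\ldots,x_d) = |hA|$, where $d = 2h-3$, the parameter ordering is as in~(\ref{eqxi}), and the ranges $n_i$ are as in~(\ref{eqvalni}). Showing that $f$ is nonincreasing in each entry, that $\delta_1 = 1$, and that $\delta_i \leq \Delta_{i-1}$ for all $i \in [2,d]$ will give that $\mathrm{im}(f) = h\cala_b$ is an interval, which by definition means $\cala_b$ is connected.

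The main computational tool is the multiplicativity of generating functions under disjoint union (Lemma~\ref{lemmagenfunc}):
\[\calf_A(z) = \calf_{B_{j,b}}(z) \cdot \prod_{m=3}^h \calf_{S_{m,s_m}}(z) \cdot \prod_{m=2}^h \calf_{T_{m,t_m}}(z).\]
Since $|hA|$ is the $z^h$-coefficient of $\calf_A(z)$, I would work throughout modulo $z^{h+1}$, using the closed-form expressions in Lemmas~\ref{lemmadense}, \ref{lemmaHSML}, and~\ref{lemmaHTML}. For the innermost parameter $x_1 = s_h - 1$, incrementing sends $(s_h, t_h)$ to $(s_h+1, t_h-1)$, so only the two factors with $m = h$ change. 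Using that $\calf_{T_{h,\ell}}(z) \equiv (1-z)^{-\ell} \pmod{z^{h+1}}$ (since the first correction in Lemma~\ref{lemmaHTML} for $m=h$ sits at $z^{h+1}$) and $(1-z^h)^{\ell-2} \equiv 1 - (\ell-2)z^h \pmod{z^{h+1}}$, a short calculation gives
\[\calf_{S_{h,s_h+1}}(z)\calf_{T_{h,t_h-1}}(z) - \calf_{S_{h,s_h}}(z)\calf_{T_{h,t_h}}(z) \equiv -z^h(1-z)^{-(s_h+t_h)} \pmod{z^{h+1}},\]
whose $z^h$-coefficient is $-1$. Multiplying by the unchanged remaining factors (each with constant term $1$) preserves this $-1$, so $f$ is nonincreasing in $x_1$ and $\delta_1 = 1$ exactly.

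For the remaining parameters, the same strategy applies, but the correction terms in Lemmas~\ref{lemmaHSML} and~\ref{lemmaHTML} for smaller $m$ produce larger increments $\delta_i$. When $x_i$ governs $s_m$ or $t_{m-1}$ with $m<h$, the $z^m$ or $z^{m+1}$ correction that survives modulo $z^{h+1}$, multiplied into the rest of $\calf_A$, contributes an increment bounded by a polynomial in $c$ of degree essentially $h-m$. The corresponding $\Delta_{i-1}$ is obtained by telescoping the per-step changes of $x_{i-1}$ across its full range $[1,n_{i-1}]$; here Lemma~\ref{lemmaProductsofGenFuncs} applies, since $b, c^{0.8}, c^{0.9} = o(c)$, to show that the product of the unchanged factors lies in $\Theta_+[[cz]]$. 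This forces $\Delta_{i-1}$ to grow at a strictly higher polynomial order in $c$ than $\delta_i$. The terminal parameter $x_d = j$ is handled separately using Lemma~\ref{lemmadense}(a), which bounds $\delta_d \leq h$, while $\Delta_{d-1}$ is by that point already polynomial in $c$.

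The main obstacle is the careful asymptotic bookkeeping across all $2h-3$ parameters simultaneously, distinguishing the $s_m$, $t_{m-1}$, and $j$ cases, and verifying $\delta_i \leq \Delta_{i-1}$ uniformly. The parameter ordering---alternating $s_m$ and $t_{m-1}$ with $m$ decreasing from $h$ down to $3$ (then $2$), ending with $j$---is chosen precisely so that each step introduces a coarser increment $\delta_i$ whose size is absorbed by the total variation $\Delta_{i-1}$ of the previous finer parameter. Tracking the exact polynomial orders in $c$ that appear on each side of each inequality, while not conceptually difficult, is the bulk of the technical work.
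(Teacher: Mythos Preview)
Your plan is the paper's plan: apply Lemma~\ref{lemmahypercubeIVT} with the ordering~(\ref{eqxi}), verify monotonicity, $\delta_1=1$, and $\delta_i\le\Delta_{i-1}$ via the generating-function machinery. Two of the intermediate bounds you assert are false, though, and would derail the argument if used as stated.

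First, $b$ is \emph{not} $o(c)$. Since $b$ ranges up to $k-\lfloor k^{0.7}\rfloor$, one can have $c\approx k^{0.7}$ and $b\approx k$, so $b$ can be of order $c^{1/0.7}$. Consequently you cannot fold $\calf_{B_j}$ in among the ``small'' factors and invoke Lemma~\ref{lemmaProductsofGenFuncs} to conclude that the product of the unchanged factors lies in $\Theta_+[[cz]]$. The paper sidesteps this by never feeding $B_j$ into that lemma: it writes $A=B_j\sqcup C$, applies Lemma~\ref{lemmaProductsofGenFuncs} only inside $\calf_C$ (where every factor is genuinely $O[[c^{0.9}z]]$), and then expands $|hA|=\sum_{\eta=0}^h |\eta C|\,|(h-\eta)B_j|$ by hand, using Lemma~\ref{lemmadense}(b) to isolate the $\eta=h$ and $\eta=h-1$ terms as dominant. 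The eventual comparison $\delta_i\le\Delta_{i-1}$ then rests on $b<c^{1.5}$ (which does hold, since $c\ge k^{0.7}$), not on $b=o(c)$.

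Second, $\delta_d\le h$ is wrong for the same structural reason. Lemma~\ref{lemmadense}(a) bounds each difference $|(h-\eta)B_{j-1}|-|(h-\eta)B_j|$ by $h$, but $|hA|$ is the convolution above, so a unit step in $j$ gets multiplied by $\sum_{\eta\le h-2}|\eta C|$, which is of order $c^{h-2}$. The correct bound $\delta_d=O(c^{h-2})$ still sits below $\Delta_{d-1}\ge\Theta_+(c^{h-1.4})$ (the $t_2$ parameter, $\mu=2$ in the even case), so the conclusion is salvageable---but not via the reasoning you gave.
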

\begin{proof}
    We will apply Lemma \ref{lemmahypercubeIVT} with $d = 2h-3$ and \[(x_1,x_2,x_3,\ldots,x_{d}) = (s_h-1,t_{h-1}-1,s_{h-1}-1,t_{h-2}-1,\ldots,s_{3}-1,t_2-1,j).\] The values $n_i$ are given in (\ref{eqvalni}). So it will suffice to show that $|hA|$ is nonincreasing in each entry $x_i$, that $\delta_1 = 1$, and that $\delta_i \leq \Delta_{i-1}$ for $i \in [2,d]$.

    Let $C = \displaystyle\bigsqcup_{m=3}^h S_{m,s_m} \sqcup \displaystyle\bigsqcup_{m=2}^h T_{m,t_m}$. By Lemmas \ref{lemmaHSML} and \ref{lemmaHTML}, we have that \begin{equation}\label{eq201}
        \calf_{C}(z) = \frac{1}{(1-z)^c} \prod_{m=3}^h (1-(s_m-2)z^mO[[s_mz]])\cdot  \prod_{m=2}^h \left(1-\binom{t_m-1}{2}z^{m+1}O[[t_mz]]\right).
    \end{equation} By the binomial theorem, \begin{equation}\label{eqbinomial}\frac{1}{(1-z)^c}= \sum_{\eta=0}^\infty \binom{\eta+c-1}{\eta} z^\eta \in \Theta_+[[c z]].\end{equation}
    
    We now prove the following two assertions about how changing one $s_\mu$ or $t_\mu$ while keeping all of the other $s_m$ and $t_{m-1}$ unchanged for $m \in [3,h-1]$ will affect $\calf_{C}(z)$.
    \begin{enumerate}[label = (\arabic*)]
        \item[($\star$)] For $\mu \in [3,h]$, increasing $s_\mu$ by 1 decreases $\calf_C(z)$ by an element of $z^\mu\Theta_+[[cz]]$.
        \item[($\star \star$)]  For $\mu \in [2,h-1]$, increasing $t_\mu$ by 1 decreases $\calf_C(z)$ by an element of $(t_\mu-1)z^{\mu+1}\Theta_+[[cz]]$.
    \end{enumerate}
    Notice that we can ignore the factor coming from $t_h$ in (\ref{eq201}) since it only affects powers of $z$ greater than or equal to $h+1$, and also notice that each of the other factors in the products on the right side of (\ref{eq201}) is in $O[[c^{0.9}z]]$. 
    To prove ($\star$) and ($\star \star$), we now use Lemma \ref{lemmaProductsofGenFuncs} to combine all of the factors in the products on the right side of (\ref{eq201}). For ($\star$), increasing $s_\mu$ by 1 to $s_\mu'=s_\mu+1$ will decrease $\calf_C(z)$ by an element of \[\Theta_+[[cz]]((s_\mu'-2)-(s_\mu-2))z^\mu O[[s_\mu z]]\prod_{m\in [3,h] \setminus \{\mu\}} O[[c^{0.9}z]] \cdot \prod_{m=2}^{h-1} O[[c^{0.9}z]] = z^\mu\Theta_+[[cz]].\] For ($\star \star$), increasing $t_\mu$ by 1 to $t_\mu'=t_\mu+1$ decreases $\calf_C(z)$ by an element of 
    \[\Theta_+[[cz]]\left(\hspace{-2 pt}\binom{t_\mu'-1}{2}-\binom{t_\mu-1}{2}\hspace{-2 pt}\right)z^{\mu+1}O[[t_\mu z]] \prod_{m=3}^h O[[c^{0.9}z]] \cdot \hspace{-4.5 pt}\prod_{m \in [2,h-1] \setminus \{\mu\}}\hspace{-4.5 pt} O[[c^{0.9}z]]= (t_\mu-1)z^{\mu+1} \Theta_+[[cz]].\]

    Since $A = B_j \sqcup C$, we have \begin{equation}\label{eqhA}|hA| = \sum_{\eta = 0}^h |\eta C||(h-\eta)B_j|.\end{equation} By ($\star$) and ($\star \star$) and Lemma \ref{lemmadense}(a), we have that $|hA|$ is nonincreasing in each entry $x_i$. From ($\star$) and (\ref{eqhA}), it is easy to check that the only term affected by increasing $s_h$ by 1 will be $|hC|$, which will decrease by 1. Hence, $\delta_1 = 1$.
    
    We now approximate $\delta_i$ for $i \in [1,d]$ and $\Delta_{i}$ for $i \in [1,d-1]$, which we will use to prove that $\delta_i \leq \Delta_{i-1}$ for $i \in [2,d]$, which implies the lemma. Throughout, we use Lemma \ref{lemmadense}(b) to show that the terms with $\eta = h$ and $\eta = h-1$ dominate in (\ref{eqhA}). 

    \textbf{Case 1: $i$ is odd and $i < d$.} Let $x_i = s_\mu-1$ for $\mu \in [3,h]$. By ($\star$), increasing $s_\mu$ by 1 will decrease $\calf_C(z)$ by an element of $z^\mu\Theta_+[[cz]]$. By (\ref{eqhA}), \begin{equation}\label{eq501}
        \delta_i = \sum_{\eta = \mu}^h \Theta_+(c^{\eta-\mu})|(h-\eta)B_j| = \Theta_+(c^{h-\mu})+b\Theta_+(c^{h-\mu-1}).
    \end{equation} So, increasing $s_\mu$ from $2$ to $\floor{c^{0.9}}$ gives that \begin{equation}\label{eq502}\Delta_i = \left(\floor{c^{0.9}}-2\right)\sum_{\eta = \mu}^h \Theta_+(c^{\eta-\mu})|(h-\eta)B_j| = \Theta_+(c^{h-\mu+0.9})+b\Theta_+(c^{h-\mu-0.1}).\end{equation}

    \textbf{Case 2: $i$ is even.} Let $x_i = t_\mu-1$ for $\mu \in [2,h-1]$. By ($\star \star$), increasing $t_\mu$ by 1 will decrease $\calf_C(z)$ by an element of $(t_\mu-1)z^{\mu+1}\Theta_+[[cz]]$. By (\ref{eqhA}), \begin{equation}\label{eq503}\delta_{i} = \left(\floor{c^{0.8}}-1\right)\sum_{\eta = \mu+1}^h \Theta_+(c^{\eta-\mu-1})|(h-\eta)B_j| = \Theta_+(c^{h-\mu-0.2})+ b\Theta_+(c^{h-\mu-1.2}).\end{equation} By ($\star \star$), increasing $t_\mu$ from $2$ to $\floor{c^{0.8}}$ will decrease $\calf_C(z)$ by an element of $\Theta_+(c^{1.6})z^{\mu+1}\Theta_+[[cz]]$. By (\ref{eqhA}), \begin{equation}\label{eq504}
        \Delta_{i} = \sum_{\eta = \mu+1}^h \Theta_+(c^{\eta-\mu-1+1.6})|(h-\eta)B_j| \geq \Theta_+(c^{h-\mu+0.6}).
    \end{equation}

    \textbf{Case 3: $i = d$.} Here, $x_d = j$. By (\ref{eqhA}) and Lemma \ref{lemmadense}(a), \begin{equation}\label{eq505}
        \delta_d = \max_{j \in [1,(h-1)(b-2)]} \sum_{\eta = 0}^{h-2} |\eta C|\big(|(h-\eta)B_{j+1}|-|(h-\eta)B_{j}|\big) \leq h \sum_{\eta = 0}^{h-2} |\eta C| \leq \Theta_+(c^{h-2}).
    \end{equation}

    Now, we can compare the formulas of (\ref{eq501}), (\ref{eq502}), (\ref{eq503}), (\ref{eq504}), and (\ref{eq505}) to show $\delta_i \leq \Delta_{i-1}$ for $i \in [2,d]$. With $i$ being odd and less than $d$, let $x_i = s_{\nu}-1$ and $x_{i-1} = t_{\nu}-1$. Then we are comparing (\ref{eq501}) with $\mu = \nu$ to (\ref{eq504}) with $\mu = \nu$. Since $3 \leq b \leq k-\floor{k^{0.7}}$, and $c = k-b$, we have $c^{1.5} > b$. Hence, $\delta_i \leq \Delta_{i-1}$. With $i$ being even, let $x_i = t_{\nu}-1$ and $x_{i-1} = s_{\nu+1}-1$. Then we are comparing (\ref{eq503}) with $\mu=\nu$ to (\ref{eq502}) with $\mu = \nu+1$, in which case it is clear that $ \delta_i \leq \Delta_{i-1}$. With $i = d$, we are comparing (\ref{eq505}) to (\ref{eq504}) with $\mu = 2$, so it is clear that $ \delta_d \leq \Delta_{d-1}$. 

    So, we have checked all of the assumptions of Lemma \ref{lemmahypercubeIVT}. Applying Lemma \ref{lemmahypercubeIVT} yields the result.
\end{proof}

We are now ready to prove Proposition \ref{lemmaMain} by showing that the values of $|hA|$ for $A \in \cala_b$ with $b$ varying form an interval, and this interval contains a value smaller than $\varepsilon k^h$ and also contains $\binom{h+k-1}{h}$.

\begin{proof}[Proof of Proposition~\ref{lemmaMain}]
Let $\cala_b$ be as in Lemma~\ref{lemmaAb} and recall that $c = k-b$. Define \[\cala \coloneq  \bigcup_{b=3}^{k-\floor{k^{0.7}}} \cala_b.\] By Lemma~\ref{lemmaAb}, $\cala_b$ is connected. To show that $\cala$ is connected, it suffices to show that $h\cala_{b} \cap h\cala_{b-1} \neq \emptyset$ for $b \in [4,k-\floor{k^{0.7}}]$. 

For any $b \in [3,k-\floor{k^{0.7}}]$, define a representative set $A_b \in \cala_b$ by taking $(x_1,\ldots,x_d) = (1,\ldots,1,1)$ using (\ref{eqdefofA}) and (\ref{eqxi}). For any $b \in [4,k-\floor{k^{0.7}}]$, define another representative set $A_{b-1}' \in \cala_{b-1}$ by taking $(x_1,\ldots,x_d) = (1,\ldots,1,n_d)$ using (\ref{eqdefofA}) and (\ref{eqxi}). Let $A_b = B_{1,b} \sqcup C$ and $A_{b-1}' = B_{s,b-1} \sqcup C'$. As ${s_\mu = t_{\mu-1} = 2}$ for all $\mu \in [3,h]$ for $C$, (\ref{eq201}) gives that $\calf_C(z) \equiv (1-z)^{-|C|} = (1-z)^{-(k-b)}$. Similarly, $\calf_{C'}(z) \equiv (1-z)^{-|C'|} = (1-z)^{-(k-b+1)}$. By Lemma \ref{lemmadense}(d), we have $\calf_{B_{1,b}}(z) \equiv (1-z)^{-1} \calf_{B_{s,b-1}}(z)$. Hence, \[\calf_{A_b}(z) = \calf_{B_{1,b}}(z) \calf_C(z) \equiv  \calf_{B_{s,b-1}}(z)(1-z)^{-1}(1-z)^{-(k-b)} \equiv \calf_{B_{s,b-1}}(z) \calf_{C'}(z)= \calf_{A_{b-1}'}(z),\] so $|hA_b| = |hA_{b-1}'|$. So $h\cala_{b} \cap h\cala_{b-1} \neq \emptyset$.

We now show that $\binom{h+k-1}{h} \in h\cala_3$ and that there is a value smaller than $\varepsilon k^h$ in $h\cala_{k-\floor{k^{0.7}}}$. This will imply that the interval $h\cala$ contains the desired interval. Let $A_3 = B_{1,3} \sqcup C$, so by similar logic to the previous paragraph we get $\calf_C(z) \equiv (1-z)^{-(k-3)}$. Plugging into Lemma \ref{lemmadense}(c) gives the formula \[\calf_{A_3}(z) \equiv \calf_{B_{1,3}}(z) \calf_C(z) \equiv (1-z)^{-3}(1-z)^{-(k-3)} \equiv (1-z)^{-k}.\] By the binomial theorem, $|hA_3| = \binom{h+k-1}{h} \in h\cala_3$. 

In a similar fashion, we get that \begin{equation}\label{eq301}
\calf_{A_{k-\floor{k^{0.7}}}}(z) \equiv (1-z)^{-\floor{k^{0.7}}}\calf_{B_{1,k-\floor{k^{0.7}}}}(z).\end{equation} By Lemma~\ref{lemmadense}(b), $|\eta B_{1,{k-\floor{k^{0.7}}}}| \in O(k)$ for all $1 \leq \eta \leq h$. By (\ref{eqbinomial}), $(1-z)^{-\floor{k^{0.7}}} \in \Theta_+[[k^{0.7}z]]$. Multiplying out the $z^h$ term of (\ref{eq301}) gives that $|hA_{k-\floor{k^{0.7}}}| \in O(k^{0.7(h-1)+1})$. Since $0.7(h-1)+1 < h$, we can get $|hA_{k-\floor{k^{0.7}}}| < \varepsilon k^h$ for any desired $\varepsilon >0$ by making $k$ large enough.
\end{proof}

\section{Future Directions}\label{sectfuturedirections}

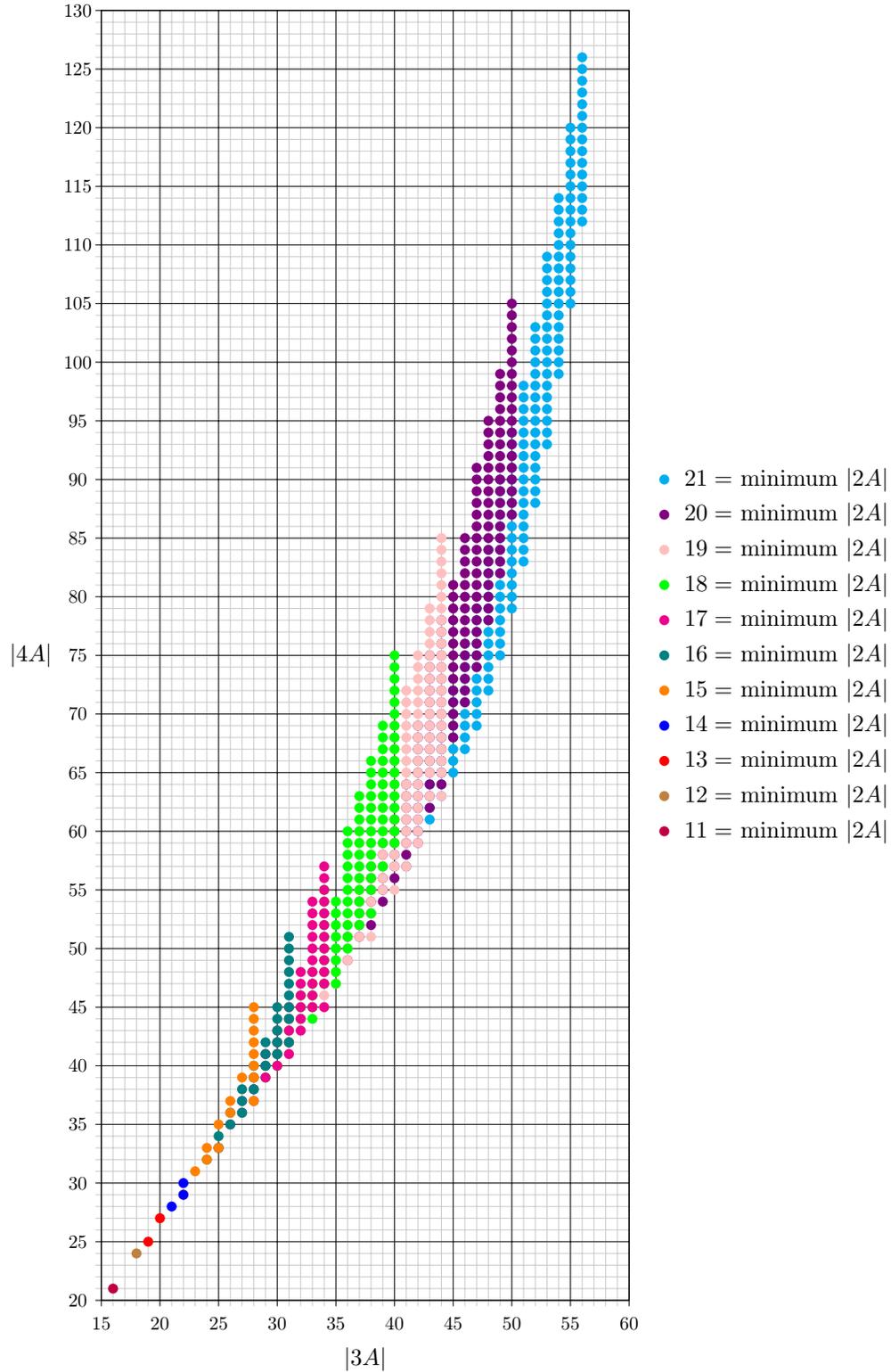
\begin{figure}[!ht]
    \centering
\begin{tikzpicture}[scale=0.153]
\draw[step=1, black!20, thin] (60,130) grid ($(15,20)-(0.5,0.5)$);
\draw[step=5, black!90] (60,130) grid ($(15,20)-(0.5,0.5)$);

\draw (60,130) rectangle (15,20);

\foreach \x in {20,25,...,130} {
    \node[scale=0.8] at (13,\x) {\x};
}
\foreach \x in {15,20,...,60} {
    \node[scale=0.8] at (\x,18) {\x};
}

\node at (37.5,15) {$|3A|$};

\node at (9,75) {$|4A|$};

\foreach \x in {
{(42,60)},{(42,62)},{(43,61)},{(43,62)},{(43,64)},{(43,66)},{(44,63)},{(44,64)},{(44,65)},{(44,66)},{(44,67)},{(44,68)},{(45,65)},{(45,66)},{(45,67)},{(45,68)},{(45,69)},{(45,70)},{(45,71)},{(45,72)},{(46,67)},{(46,68)},{(46,69)},{(46,70)},{(46,71)},{(46,72)},{(46,73)},{(46,74)},{(46,75)},{(46,76)},{(47,69)},{(47,70)},{(47,71)},{(47,72)},{(47,73)},{(47,74)},{(47,75)},{(47,76)},{(47,77)},{(47,78)},{(47,79)},{(48,72)},{(48,73)},{(48,74)},{(48,75)},{(48,76)},{(48,77)},{(48,78)},{(48,79)},{(48,80)},{(48,81)},{(48,82)},{(48,83)},{(48,84)},{(49,75)},{(49,76)},{(49,77)},{(49,78)},{(49,79)},{(49,80)},{(49,81)},{(49,82)},{(49,83)},{(49,84)},{(49,85)},{(49,86)},{(49,87)},{(49,88)},{(49,89)},{(49,90)},{(50,79)},{(50,80)},{(50,81)},{(50,82)},{(50,83)},{(50,84)},{(50,85)},{(50,86)},{(50,87)},{(50,88)},{(50,89)},{(50,90)},{(50,91)},{(50,92)},{(50,93)},{(50,94)},{(51,83)},{(51,84)},{(51,85)},{(51,86)},{(51,87)},{(51,88)},{(51,89)},{(51,90)},{(51,91)},{(51,92)},{(51,93)},{(51,94)},{(51,95)},{(51,96)},{(51,97)},{(51,98)},{(52,88)},{(52,89)},{(52,90)},{(52,91)},{(52,92)},{(52,93)},{(52,94)},{(52,95)},{(52,96)},{(52,97)},{(52,98)},{(52,99)},{(52,100)},{(52,101)},{(52,102)},{(52,103)},{(53,93)},{(53,94)},{(53,95)},{(53,96)},{(53,97)},{(53,98)},{(53,99)},{(53,100)},{(53,101)},{(53,102)},{(53,103)},{(53,104)},{(53,105)},{(53,106)},{(53,107)},{(53,108)},{(53,109)},{(54,99)},{(54,100)},{(54,101)},{(54,102)},{(54,103)},{(54,104)},{(54,105)},{(54,106)},{(54,107)},{(54,108)},{(54,109)},{(54,110)},{(54,111)},{(54,112)},{(54,113)},{(54,114)},{(55,105)},{(55,106)},{(55,107)},{(55,108)},{(55,109)},{(55,110)},{(55,111)},{(55,112)},{(55,113)},{(55,114)},{(55,115)},{(55,116)},{(55,117)},{(55,118)},{(55,119)},{(55,120)},{(56,112)},{(56,113)},{(56,114)},{(56,115)},{(56,116)},{(56,117)},{(56,118)},{(56,119)},{(56,120)},{(56,121)},{(56,122)},{(56,123)},{(56,124)},{(56,125)},{(56,126)}}{
    \node[cyan, vtx] (cyan\x) at \x {};
}
\foreach \x in {
{(36,49)},{(36,51)},{(37,51)},{(37,52)},{(37,53)},{(37,54)},{(38,52)},{(38,53)},{(38,54)},{(38,55)},{(38,56)},{(38,57)},{(39,54)},{(39,55)},{(39,56)},{(39,57)},{(39,58)},{(39,59)},{(40,56)},{(40,57)},{(40,58)},{(40,59)},{(40,60)},{(40,61)},{(41,57)},{(41,58)},{(41,59)},{(41,60)},{(41,61)},{(41,62)},{(41,63)},{(41,64)},{(42,59)},{(42,60)},{(42,61)},{(42,62)},{(42,63)},{(42,64)},{(42,65)},{(42,66)},{(42,67)},{(42,68)},{(42,69)},{(43,62)},{(43,63)},{(43,64)},{(43,65)},{(43,66)},{(43,67)},{(43,68)},{(43,69)},{(43,70)},{(43,71)},{(43,72)},{(43,73)},{(43,74)},{(43,75)},{(44,64)},{(44,65)},{(44,66)},{(44,67)},{(44,68)},{(44,69)},{(44,70)},{(44,71)},{(44,72)},{(44,73)},{(44,74)},{(44,75)},{(44,76)},{(44,77)},{(44,78)},{(45,68)},{(45,69)},{(45,70)},{(45,71)},{(45,72)},{(45,73)},{(45,74)},{(45,75)},{(45,76)},{(45,77)},{(45,78)},{(45,79)},{(45,80)},{(45,81)},{(46,71)},{(46,72)},{(46,73)},{(46,74)},{(46,75)},{(46,76)},{(46,77)},{(46,78)},{(46,79)},{(46,80)},{(46,81)},{(46,82)},{(46,83)},{(46,84)},{(46,85)},{(47,74)},{(47,75)},{(47,76)},{(47,77)},{(47,78)},{(47,79)},{(47,80)},{(47,81)},{(47,82)},{(47,83)},{(47,84)},{(47,85)},{(47,86)},{(47,87)},{(47,88)},{(47,89)},{(47,90)},{(47,91)},{(48,78)},{(48,79)},{(48,80)},{(48,81)},{(48,82)},{(48,83)},{(48,84)},{(48,85)},{(48,86)},{(48,87)},{(48,88)},{(48,89)},{(48,90)},{(48,91)},{(48,92)},{(48,93)},{(48,94)},{(48,95)},{(49,82)},{(49,83)},{(49,84)},{(49,85)},{(49,86)},{(49,87)},{(49,88)},{(49,89)},{(49,90)},{(49,91)},{(49,92)},{(49,93)},{(49,94)},{(49,95)},{(49,96)},{(49,97)},{(49,98)},{(49,99)},{(50,87)},{(50,88)},{(50,89)},{(50,90)},{(50,91)},{(50,92)},{(50,93)},{(50,94)},{(50,95)},{(50,96)},{(50,97)},{(50,98)},{(50,99)},{(50,100)},{(50,101)},{(50,102)},{(50,103)},{(50,104)},{(50,105)}}{
    \node[violet, vtx] (violet\x) at \x {};
}
\foreach \x in {
{(29,39)},{(30,41)},{(31,41)},{(31,42)},{(32,43)},{(32,44)},{(32,45)},{(33,45)},{(33,46)},{(33,47)},{(34,45)},{(34,46)},{(34,47)},{(34,48)},{(34,49)},{(35,47)},{(35,48)},{(35,49)},{(35,50)},{(35,51)},{(36,49)},{(36,50)},{(36,51)},{(36,52)},{(36,53)},{(37,51)},{(37,52)},{(37,53)},{(37,54)},{(37,55)},{(37,56)},{(37,57)},{(37,58)},{(37,59)},{(37,60)},{(37,61)},{(38,51)},{(38,53)},{(38,54)},{(38,55)},{(38,56)},{(38,57)},{(38,58)},{(38,59)},{(38,60)},{(38,61)},{(38,62)},{(38,63)},{(39,55)},{(39,56)},{(39,57)},{(39,58)},{(39,59)},{(39,60)},{(39,61)},{(39,62)},{(39,63)},{(39,64)},{(39,65)},{(39,66)},{(40,55)},{(40,57)},{(40,58)},{(40,59)},{(40,60)},{(40,61)},{(40,62)},{(40,63)},{(40,64)},{(40,65)},{(40,66)},{(40,67)},{(40,68)},{(40,69)},{(41,57)},{(41,59)},{(41,60)},{(41,61)},{(41,62)},{(41,63)},{(41,64)},{(41,65)},{(41,66)},{(41,67)},{(41,68)},{(41,69)},{(41,70)},{(41,71)},{(41,72)},{(42,59)},{(42,60)},{(42,61)},{(42,62)},{(42,63)},{(42,64)},{(42,65)},{(42,66)},{(42,67)},{(42,68)},{(42,69)},{(42,70)},{(42,71)},{(42,72)},{(42,73)},{(42,74)},{(42,75)},{(43,63)},{(43,65)},{(43,66)},{(43,67)},{(43,68)},{(43,69)},{(43,70)},{(43,71)},{(43,72)},{(43,73)},{(43,74)},{(43,75)},{(43,76)},{(43,77)},{(43,78)},{(43,79)},{(44,63)},{(44,65)},{(44,66)},{(44,67)},{(44,68)},{(44,69)},{(44,70)},{(44,71)},{(44,72)},{(44,73)},{(44,74)},{(44,75)},{(44,76)},{(44,77)},{(44,78)},{(44,79)},{(44,80)},{(44,81)},{(44,82)},{(44,83)},{(44,84)},{(44,85)}}{
    \node[pink, vtx] (pink\x) at \x {};
}
\foreach \x in {
{(28,38)},{(29,39)},{(29,40)},{(30,40)},{(30,41)},{(30,42)},{(31,42)},{(31,43)},{(31,44)},{(32,44)},{(32,45)},{(32,46)},{(33,44)},{(33,45)},{(33,46)},{(33,47)},{(33,48)},{(34,47)},{(34,48)},{(34,49)},{(34,50)},{(34,51)},{(34,52)},{(34,53)},{(34,54)},{(34,55)},{(35,47)},{(35,48)},{(35,49)},{(35,50)},{(35,51)},{(35,52)},{(35,53)},{(35,54)},{(36,50)},{(36,51)},{(36,52)},{(36,53)},{(36,54)},{(36,55)},{(36,56)},{(36,57)},{(36,58)},{(36,59)},{(36,60)},{(37,52)},{(37,53)},{(37,54)},{(37,55)},{(37,56)},{(37,57)},{(37,58)},{(37,59)},{(37,60)},{(37,61)},{(37,62)},{(37,63)},{(38,53)},{(38,55)},{(38,56)},{(38,57)},{(38,58)},{(38,59)},{(38,60)},{(38,61)},{(38,62)},{(38,63)},{(38,64)},{(38,65)},{(38,66)},{(39,57)},{(39,59)},{(39,60)},{(39,61)},{(39,62)},{(39,63)},{(39,64)},{(39,65)},{(39,66)},{(39,67)},{(39,68)},{(39,69)},{(40,59)},{(40,60)},{(40,61)},{(40,62)},{(40,63)},{(40,64)},{(40,65)},{(40,66)},{(40,67)},{(40,68)},{(40,69)},{(40,70)},{(40,71)},{(40,72)},{(40,73)},{(40,74)},{(40,75)}}{
    \node[green, vtx] (green\x) at \x {};
}
\foreach \x in {
{(25,33)},{(26,35)},{(27,36)},{(27,37)},{(28,37)},{(28,38)},{(28,39)},{(29,39)},{(29,40)},{(29,41)},{(30,40)},{(30,41)},{(30,42)},{(30,43)},{(31,41)},{(31,42)},{(31,43)},{(31,44)},{(31,45)},{(32,43)},{(32,44)},{(32,45)},{(32,46)},{(32,47)},{(32,48)},{(33,45)},{(33,46)},{(33,47)},{(33,48)},{(33,49)},{(33,50)},{(33,51)},{(33,52)},{(33,53)},{(33,54)},{(34,45)},{(34,47)},{(34,48)},{(34,49)},{(34,50)},{(34,51)},{(34,52)},{(34,53)},{(34,54)},{(34,55)},{(34,56)},{(34,57)}}{
    \node[magenta, vtx] (magenta\x) at \x {};
}
\foreach \x in {
{(24,32)},{(25,33)},{(25,34)},{(26,35)},{(26,36)},{(27,36)},{(27,37)},{(27,38)},{(28,38)},{(28,39)},{(28,40)},{(29,40)},{(29,41)},{(29,42)},{(30,41)},{(30,42)},{(30,43)},{(30,44)},{(30,45)},{(31,42)},{(31,44)},{(31,45)},{(31,46)},{(31,47)},{(31,48)},{(31,49)},{(31,50)},{(31,51)}}{
    \node[teal, vtx] (teal\x) at \x {};
}
\foreach \x in {
{(22,29)},{(23,31)},{(24,32)},{(24,33)},{(25,33)},{(25,35)},{(26,36)},{(26,37)},{(27,39)},{(28,37)},{(28,39)},{(28,40)},{(28,41)},{(28,42)},{(28,43)},{(28,44)},{(28,45)}}{
    \node[orange, vtx] (orange\x) at \x {};
}
\foreach \x in {
{(21,28)},{(22,29)},{(22,30)}}{
    \node[blue, vtx] (blue\x) at \x {};
}
\foreach \x in {
{(19,25)},{(20,27)}}{
    \node[red, vtx] (red\x) at \x {};
}
\foreach \x in {
{(18,24)}}{
    \node[brown, vtx] (brown\x) at \x {};
}
\foreach \x in {
{(16,21)}}{
    \node[purple, vtx] (purple\x) at \x {};
}

\node[cyan, vtx] at (63,95) {};
\node[right] at (64,95) {$21 = $ minimum $|2A|$};
\node[violet, vtx] at (63,91) {};
\node[right] at (64,91) {$20 = $ minimum $|2A|$};
\node[pink, vtx] at (63,87) {};
\node[right] at (64,87) {$19 = $ minimum $|2A|$};
\node[green, vtx] at (63,83) {};
\node[right] at (64,83) {$18 = $ minimum $|2A|$};
\node[magenta, vtx] at (63,79) {};
\node[right] at (64,79) {$17 = $ minimum $|2A|$};
\node[teal, vtx] at (63,75) {};
\node[right] at (64,75) {$16 = $ minimum $|2A|$};
\node[orange, vtx] at (63,71) {};
\node[right] at (64,71) {$15 = $ minimum $|2A|$};
\node[blue, vtx] at (63,67) {};
\node[right] at (64,67) {$14 = $ minimum $|2A|$};
\node[red, vtx] at (63,63) {};
\node[right] at (64,63) {$13 = $ minimum $|2A|$};
\node[brown, vtx] at (63,59) {};
\node[right] at (64,59) {$12 = $ minimum $|2A|$};
\node[purple, vtx] at (63,55) {};
\node[right] at (64,55) {$11 = $ minimum $|2A|$};

\end{tikzpicture}
\caption{The points $(|3A|,|4A|)$ for all $A \subset \Z$ with $|A| = 6$. The colors correspond to the minimum value of $|2A|$ for specified $|3A|$ and $|4A|$.}\label{fig3}
\end{figure}

\subsection{Working towards Conjecture~\ref{mainconj}}
Let $k_h'$ be the \emph{minimal} natural number such that for all $k>k_h'$, we have \begin{equation}\label{eqconj}\calr(h,k) = \left[hk-h+1,\binom{h+k-1}{h}\right]\setminus \Delta_{h,k}.\end{equation} Then, Theorem~\ref{thmgeneralh} proves that $k_h'$ exists, and by Remark \ref{remarkasterisk}, $k_h' \leq 2^{20h^2}$. Conjecture~\ref{mainconj} is that $k_h' \leq h$. It would be interesting to try to improve the upper bound on $k_h'$ to exponential in $h$, or polynomial in $h$, in order to work towards Conjecture~\ref{mainconj}. Or, if these bounds were improved for small $h$, it might be possible to explicitly compute that $k_4' = 4$ or $k_5' = 4$, which is what numerical data suggests. We pose the question of whether the set of all $k$ satisfying (\ref{eqconj}) is always a half-open interval given by $[k_{h}'+1,\infty)$.

Another natural follow-up would be to extend the inductive arguments from the proof of Theorem~\ref{thmh3} to $h=4$ or $h=5$. This could be approached using geometric pictures like Figure~\ref{fig3}. We have been unable to do this using the exact methods from Theorem~\ref{thmh3} because it seems necessary to control $|2A|$, $|3A|$, and $|4A|$ all at once. 

One could also try to compute more data to support or disprove Conjecture~\ref{mainconj} for small $h$ and $k$. So far, we have used a computer to show that this conjecture holds for all $(h,k)$ satisfying $h+k < 12$. Our algorithm uses a brute force method;  it enumerates all sets $A$ with $|A| = k$ and $\text{diam}(A) \leq d$ for some fixed $d$ and then computes $|hA|$ for all of them. More clever or efficient algorithms might be able to find much more data to support or disprove Conjecture~\ref{mainconj}.
\subsection{What if \texorpdfstring{$h \geq k$}{h > k}?}\label{secthgeqk}
Let a ``gap'' in $\calr(h,k)$ be a number in $\left[hk-h+1,\binom{h+k-1}{h}\right]\setminus \calr(h,k)$. For many pairs $(h,k)$ with $h \geq k$, there are gaps in $\calr(h,k)$ that are not in $\Delta_{h,k}$; in other words, (\ref{eqconj}) does not hold. We pose the problem of studying where all of these gaps are for certain values of $(h,k)$ and proving that many of these other gaps exist.

One place to start would be Theorem \ref{thmlev}, which is due to \cite{Lev96}. If $h > k+1$, similar methods to our proof of Theorem \ref{mainthm} with larger diameters yield other gaps in $\calr(h,k)$ outside $\Delta_{h,k}$. For example, if $\diam(A) \leq 2k-3$, then $|hA| \leq h(2k-3) + 1$ and if $\diam(A) \geq 2k-2$ then similar logic to (\ref{eqboundhA}) gives that $|hA| \geq k-1 + (h-1)(2k-2) = h(2k-3) + h-k+1$. Combining these bounds yields that $\calr(h,k) \cap [h(2k-3)+2,h(2k-3)+h-k] = \emptyset$. Similar analysis with larger values of $\diam(A)$ and larger values of $h-k$ will yield more gaps. 

With $k=3$ and $h \geq 4$, Nathanson \cite[Theorem 9]{nathanson25Problems} found the exact form of $\calr(h,k)$, showing there are many more gaps in $\calr(h,k)$. With $k=4$ and $h \geq 4$, experimental data suggests $5h \not \in \calr(h,4)$, which is an example of a conjectured gap. The only (conjectured) gaps in $\calr(h,k)$ outside $\Delta_{h,k}$ we have found with $h \leq k+1$ occur when $(h,k) \in \{(4,3),(4,4),(5,4)\}$. When $h$ is significantly larger than $k$, there are many more gaps in $\calr(h,k)$ than can be explained by Theorem \ref{thmlev}. One could conjecture where these other gaps occur, and try to prove that any of them exist. 

\subsection{Restricted Sumsets}
For $A \subset \Z$, define the \emph{$h$-fold restricted sumset of $A$} by \[\widehat{hA} \coloneq  \{a_1+\cdots+a_h: a_i \in A \text{ and } a_i \neq a_j \text{ if } i \neq j \}.\] Then, Nathanson defined $\widehat{\calr}(h,k)$ analogously to $\calr(h,k)$.
\begin{definition}\cite{nathanson25Problems}
      Define the \emph{range of cardinalities of $h$-fold restricted sumsets } \[\widehat{\calr}(h,k)\coloneq  \{|\widehat{hA}|:A \subset \Z \text{ and } |A| = k\}.\]
\end{definition}
All of the questions asked or answered in this paper about $\calr(h,k)$ can be asked about $\widehat{\calr}(h,k)$, and one could study whether similar results to Theorems \ref{mainthm}, \ref{thmgeneralh}, and \ref{thmh3} hold in this context.
\subsection{Working with Torsion}
The results in this paper also hold with $\Z$ replaced by any infinite torsion-free abelian group. We pose the problem of determining the form of $\calr(h,k)$ or $\widehat{\calr}(h,k)$ in groups with torsion. One interesting candidate for such a group is $(\Z/m\Z)^n$, with $m \in \N \setminus \{1\}$ and $n \in \N \cup \{\infty\}$. This seems interesting already when $m = 2$.

\subsection{Studying \texorpdfstring{$|hA|$}{|hA|} and \texorpdfstring{$|h'A|$}{|h'A|} together}
Another natural follow-up question is to try to figure out the exact form of the points in Figures \ref{fig2} and \ref{fig3}. In particular, one can ask what all of the possible tuples $(|hA|,|h'A|)$ are, where $A \subset \Z$ and $|A| = k$. With $h = 2$ and $h' = 3$, this is already a very difficult question, as illustrated in Figure~\ref{fig2}. The Plunnecke--Ruzsa \cite{Plunnecke70,Ruzsa89} inequality gives a bound on where these points are found, but it is not close to tight. One could try to bound the points in a plot such as Figure~\ref{fig2} or Figure~\ref{fig3} between two curves, in a tight or nearly tight way. There are also many other questions that can be asked about the structure of the points in these plots; for example, in Figure~\ref{fig3} we can ask about the possible tuples $(|3A|,|4A|)$ if we fix $|A|$ and $|2A|$, or about the possible tuples $(|2A|,|3A|,|4A|)$ if we fix $|A|$. All of these questions would be particularly interesting as $k$ gets large or in the limit as $k \to \infty$. 

In graph theory, Razborov \cite{Razborov08} proved an inequality about the set of pairs of possible edge and triangle densities in a graph. The illustration of this in \cite[Figure 5.1]{Zhao23} looks somewhat similar in character to Figure~\ref{fig2}. It would be fascinating to see if there is a relation between the sets of possible tuples $\big\{\big(|\{\text{edges in }G\}|, |\{\text{triangles in }G\}|\big):G \text{ a graph with $k$ vertices}\big\}$ and $\{(|2A|,|3A|):A \subset \Z \text{ and } |A| = k\}$. 


\section*{Acknowledgments}
This research was conducted at the University of Minnesota Duluth REU with support from Jane Street Capital, NSF Grant 2409861, and donations from Ray Sidney and Eric Wepsic. I thank Joe Gallian and Colin Defant for providing this wonderful opportunity. I thank Noah Kravitz for suggesting this project and advising my whole research process, during which he gave countless pieces of detailed and invaluable feedback and suggestions. I also thank Daniel Zhu and Carl Schildkraut for many helpful suggestions and pieces of feedback during my work on this project. I thank Eliot Hodges, Noah Kravitz, Mitchell Lee, Rupert Li, and Maya Sankar for advising the whole Duluth REU. I thank Dr. Mohan of the B.K. Birla Institute of Engineering and Technology for bringing the reference \cite{TX19} to my attention.

\bibliographystyle{amsplain}


\begin{dajauthors}
\begin{authorinfo}[isaac]
    Isaac Rajagopal\\
  Massachusetts Institute of Technology\\
  Cambridge, Massachusetts\\
  isaacraj\imageat{}mit\imagedot{}edu \\
\end{authorinfo}
\end{dajauthors}

\end{document}